\documentclass{my_m2an} 
\usepackage{stmaryrd}
\usepackage{amsmath,amssymb}
\usepackage[mathcal]{euscript} 
\usepackage{tikz}
\usetikzlibrary{patterns} 
\usepackage{fullpage}
\usepackage{hyperref}

\usepackage{color}
\definecolor{bfonce}{rgb}{0.,0.,0.8}	
\definecolor{bclair}{rgb}{0.87,0.92,1.}
\definecolor{orangec}{rgb}{1.,0.6,0.}
\definecolor{rougef}{rgb}{0.8,0.,0.}

\newcommand{\bfa}{{\boldsymbol a}}
\newcommand{\bfe}{{\boldsymbol e}}
\newcommand{\bff}{{\boldsymbol f}}
\newcommand{\bfF}{{\boldsymbol F}}
\newcommand{\bfn}{\boldsymbol n}
\newcommand{\bfu}{{\boldsymbol u}}
\newcommand{\bfv}{{\boldsymbol v}}
\newcommand{\bfx}{\boldsymbol x}
\newcommand{\bfy}{\boldsymbol y}

\newcommand{\dx}{\ \mathrm{d}\bfx}
\newcommand{\dt}{\ \mathrm{d} t}

\newcommand{\edgespart}{{\mathfrak F}}
\newcommand{\edgespartint}{\edgespart_{\mathrm{int}}}
\newcommand{\edge}{\sigma}
\newcommand{\edges}{{\mathcal E}}
\newcommand{\edgesint}{{\mathcal E}_{{\rm int}}}
\newcommand{\mesh}{{\mathcal M}}
\newcommand{\tdisc}{{\mathcal T}}

\newcommand{\dive}{{\mathrm{div}}}
\newcommand{\diam}{{\mathrm{diam}}}
\newcommand{\gradi}{\boldsymbol \nabla}
\newcommand{\interpm}{\mathcal I\exm}

\newcommand{\mnn}{{m\in\xN}}
\newcommand{\exm}{^{(m)}}

\newtheorem{theorem}{Theorem}[section]
\newtheorem{lemma}[theorem]{Lemma}

\theoremstyle{definition}

\theoremstyle{remark}
\newtheorem{remark}[theorem]{Remark}

\begin{document}
\title[Weak consistency of finite volume schemes]
{Weak consistency of finite volume schemes \\ for systems of non linear conservation laws:  \\ extension to staggered schemes}
\author{T. Gallou\"et}
\address{I2M UMR 7373, Aix-Marseille Universit\'e, CNRS, Ecole Centrale de Marseille. 
39 rue Joliot Curie. 13453 Marseille, France. \\ (raphaele.herbin@univ-amu.fr)}

\author{R. Herbin}
\address{I2M UMR 7373, Aix-Marseille Universit\'e, CNRS, Ecole Centrale de Marseille. 
39 rue Joliot Curie. 13453 Marseille, France. \\ (raphaele.herbin@univ-amu.fr)}

\author{J.-C. Latch\'e}
\address{IRSN, BP 13115, St-Paul-lez-Durance Cedex, France (jean-claude.latche@irsn.fr)}

\subjclass[2010]{Primary 65M08, 76N15 ; Secondary 65M12, 76N19}
\keywords{Finite-volume schemes, convection, consistency.}
%

%
\begin{abstract}
We prove in this paper the weak consistency of a general finite volume convection operator acting on discrete functions which are possibly not piecewise-constant over the cells of the mesh and over the time steps.
It yields an extension of the Lax-Wendroff if-theorem for general colocated or non-colocated schemes.
This result is obtained for general polygonal or polyhedral meshes, under assumptions which, for usual practical cases, essentially boil down to a flux-consistency constraint; this latter is, up to our knowledge, novel and compares the discrete flux at a face to the mean value over the adjacent cell of the continuous flux function applied to the discrete unknown function.
We then apply this result to prove the consistency of a finite volume discretisation of a convection operator featuring a (convected) scalar variable and a (convecting) velocity field, with a staggered approximation, \ie\ with a cell-centred approximation of the scalar variable and a face-centred approximation of the velocity.
\end{abstract}
\maketitle
%
%
\section{Introduction}\label{sec:intro}

The well-known Lax-Wendroff theorem \cite{lax-60-sys} states that, on uniform 1D grids, a flux-consistent and conservative cell-centred finite-volume scheme for a system of conservation laws is weakly consistent, in the sense that the limit of any a.e. convergent sequence of $L^\infty$-bounded numerical solutions, obtained with a sequence of grids with mesh and time steps tending to zero, is a weak solution of the conservation law; it is also stated in a different form \cite[Section 12.10]{lev-02-fin}, with a BV bound assumption on the scheme.
It is generalised to non uniform 1D or Cartesian meshes in \cite[Theorem 21.2]{egh-00-fin}.
In a recent work \cite{ben-19-cons}, the Lax-Wendroff theorem is extended to obtain some error estimates for higher order schemes on uniform 1D meshes. 
The case of general (and, in particular, unstructured) discretisations has been also been tackled over the past decades: \cite{kro-96-lax}, \cite[Section 4.2.2]{god-96-num} \cite{ell-07-lax}, \cite{gal-19-wea}.
In \cite{ell-07-lax}, a quasi-uniformity assumption  is required  on the mesh, but the flux is only required to be continuous, while in \cite{gal-19-wea}, there is no uniformity assumption on the mesh but the flux is supposed to be Lipschitz continuous or at least ``lip-diag" \cite[Remark 5.2]{gal-19-wea}. 
In all the above cited works, the scheme is supposed to be colocated, in the sense that the discrete unknowns are associated to the cells of the mesh, so these results may not be used directly to cope with staggered approximations, for instance.

\medskip
The aim of this paper is to address all type of approximations, co-located or staggered; indeed, we prove the weak consistency of a general finite volume convection operator acting on general (\ie\ possibly not piecewise-constant over the cells of the mesh and over the time steps) discrete functions, under sufficient conditions which, in usual cases, turn to essentially boil down to a new flux consistency requirement; this weak consistency result is stated in Theorem \ref{th:lw} below.
The flux consistency constraint, formulated by Assertion \eqref{hyp:x}, demands a control on the difference between the discrete flux at a face (or edge) and the mean value over the adjacent cell of the continuous flux function applied to the discrete unknown function.
Theorem \ref{th:lw} is valid for general polygonal or polyhedral meshes without any supplementary assumptions on the mesh; as a by product of this work, we thus also obtain a consistency result for colocated schemes (\ie\ schemes using only piecewise-constant per cell unknowns) with possibly relaxed assumptions for the mesh compared to \cite{gal-19-wea}.
However, let us note that the proof that the assumption \eqref{hyp:x} is satisfied  is usually based on the  control of the difference between the numerical solution and its space or time translates, see \cite[Section 4]{gal-19-wea} and that these latter results may require some regularity assumptions on the mesh, see also Remark \ref{rem:flux}.

\medskip
This paper is organized as follows.
We state and prove the general consistency result in Section \ref{sec:gen_cons}.
We then apply it in Section \ref{sec:appli} to a staggered discretisation; precisely speaking, we show the consistency of a finite volume discretisation of a nonlinear convection operator for a scalar variable $\rho$ of the form $\partial_t \beta(\rho) + \dive (g(\rho)\bfu)$, where $\beta$ and $g$ are regular functions and $\bfu$ is a velocity field, and where we use a cell-centred approximation for $\rho$ and a face-centred approximation of $\bfu$.
%
%
\section{The general consistency result} \label{sec:gen_cons}

The aim is to prove the weak consistency of finite volume approximations of nonlinear convective terms which appear in most models of fluid flow. 
The general context is the following. 
Given a numerical scheme which yields some approximate solutions to the system of conservative partial differential equations, we assume that these approximate solutions converge to some functions strongly in $L^1$ , and we wish to show that the limit is indeed a solution to the system, at least in a weak sense. 
In order to do so, the usual idea is to mutiply the numerical scheme by an interpolate of a smooth function, sum over the cells of the mesh and the time steps and show that passing to the limit, we get a weak formulation of the system of partial differential equations. 
The theorem that we prove below is a mean to prove that one may indeed pass to the limit in the terms that involve nonlinear convection operators.
Let us begin with an example. 
Consider the barotropic Euler equations, which read:
\begin{subequations}\label{euler-baro}
\begin{align} & 
\partial_t \bar \rho + \dive(\bar \rho \bar \bfu) = 0,  \label{euler-baro-mass}
\\ \label{euler-baro-qdm} &
\partial_t(\bar  \rho \bar  \bfu)+ \dive(\bar \rho \bar \bfu \otimes\bar  \bfu)  + \gradi \bar p = 0,
\end{align} 
\end{subequations}
where $\bar \rho$ is the density, $\bar \bfu$ the velocity and $p$ the pressure, which, for barotropic flows, is a function of $\bar \rho$ only: $\bar p= \mathfrak p(\bar \rho)$.
Here and in the remainder of the paper, we use overlined letters when referring to the solution of the continuous problem, while non overlined letters will be used for discrete unknowns.
This system of equations is supplemented by an initial condition and suitable boundary conditions.
An entropy weak solution of the system satisfies the equations \eqref{euler-baro} and also satisfies (in a weak sense, which includes the initial condition) the following entropy condition:
\begin{equation} \label{euler-baro-entropie}
 \partial_t \bar E + \dive((\bar E  + \bar p) \bar \bfu)\le 0, \mbox{ with } \bar E = \frac 1 2 \bar \rho |\bar \bfu|^2 + \mathcal  H (\bar \rho) \mbox{ and  }  \mathcal H (s)  = s \int \dfrac{\mathfrak p (s)}{s^2} \ ds.
\end{equation} 
The weak  consistency of staggered finite volume schemes for this system of equations discretised on multi-dimensional Cartesian or unstructured meshes has been the object of several recent papers, see e.g. \cite{her-20-cons,her-18-cons}.
The system \eqref{euler-baro}-\eqref{euler-baro-entropie} may be written as
\begin{align} \label{euler-baro-C}
&\bar {\mathcal C}_1(\bar \rho,\bar \bfu)  = 0,
\\
&\bar {\mathcal C}_2(\bar \rho,\bar \bfu) + \gradi \bar p = 0,
\\
&\bar {\mathcal C}_3(\bar E,\bar \bfu)  + \dive (\bar p \bar \bfu )  \le 0,
\end{align} 
with $ \bar {\mathcal C}_1(\bar \rho,\bar \bfu)  = \partial_t \bar \rho + \dive(\bar \rho \bar \bfu) $, $\bar {\mathcal C}_2(\bar \rho,\bar \bfu)  = \partial_t(\bar  \rho  \bar \bfu)+ \dive(\bar \rho \bar \bfu \otimes\bar  \bfu) $, 
and $\bar {\mathcal C}_3(\bar E,\bar \bfu) = \partial_t \bar E + \dive(\bar E \bar \bfu)$.
In the above cited works, the system is discretised with an explicit or implicit in time scheme, and the convection operators  $\mathcal C_1$ and $\mathcal C_2$ by a first or second order finite volume scheme. 
 In fact, the system of the barotropic equations can be discretised by different schemes: explicit or implicit, colocated meshes or staggered meshes, using a Riemann solver or using an equation-by- equation procedure. 
In all cases, the consistency study will have to deal with each of the discrete non linear convection operator  $\mathcal C_i$ corresponding to $\bar {\mathcal C}_i$.
The present work aims at simplifying the proofs of consistency by giving a general result for any nonlinear convection term, discretised on colocated or staggered mesh, thereby extending our previous result of \cite{gal-19-wea} to staggered meshes. 
Theorem \ref{th:lw} below is an efficient tool to this purpose. 
We emphasize that both implicit or explicit schemes may be addressed, since the proof deals separately with the discrete time operator and the discrete space divergence operator. 

\medskip
Let us then turn to the general setting; we  suppose that:
\begin{equation}\label{pb-general}
\Omega \subset \xR^d, \; d= 1, 2,3, \; T \in (0,+\infty),
p \in \xN^\ast, \;\beta \in C^0(\xR^p, \xR),\; \bff \in C^0(\xR^p, \xR^d).
\end{equation}
We consider the conservative convection operator $\bar{\mathcal C}(\bar U)$ acting on a vector $\bar U \in \xR^p$ of functions, real-valued, and defined (in the distributional sense), for $\bar U \in L^\infty(\Omega\times(0,T), \xR^p)$, by: 
\begin{align} \label{def:op-conv}
\bar{\mathcal C} (\bar U): &\quad \Omega\times(0,T) 	\to \xR, \nonumber
\\ & \quad 
(\bfx,t) \mapsto \partial_t(\beta(\bar U))(\bfx,t) + \dive(\bff(\bar U))(\bfx,t).
\end{align}
Note that, here and throughout the paper, we use $\beta(\bar U)$ (resp. $\bff(\bar U)$ to denote the function $\beta \circ  \bar U $ obtained by composition of $\beta$ and $\bar U$ (resp. $\bff$ and $\bar U$), so, for instance, $\beta(\bar U)(\bfx,t)$ stands for $\beta(\bar U(\bfx,t))$.
In the above example of the barotropic Euler equations \eqref{euler-baro}, we have, for $i = 1,2$, $\bar{\mathcal C}_i(\bar U)  = \partial_t(\beta_i(\bar U)) +  \dive(\bff_i(\bar U))$, with  $\bar U = (\bar \rho,  \bar\bfu)$, $\beta_1( \bar U) = \bar \rho$, $\beta_2( \bar U) = \bar \rho \bar \bfu$, $\bff_1(\bar U) =  \bar \rho   \bar\bfu$ and $\bff_2(\bar U) =  \bar \rho  \bar \bfu \otimes \bar \bfu$.

\medskip
Let us denote by $\mathcal P$ a mesh of the domain, $\Omega$, consisting of a set of disjoint open polyhedral or polygonal open subsets of $\Omega$, whose union of closures is $\bar \Omega$.
To avoid cumbersome notations, we assume that any pair of adjacent cells shares a whole face, and not only a part of it; however this assumption is not necessary for the result of Theorem \ref{th:lw} to hold. 
We denote by $\delta(\mathcal P)$ the space step, defined by
\[
\delta(\mathcal P) = \max_{P \in \mathcal P} \diam (P).
\] 
Let $\edgespart$ denote the set of faces (in 3D, or edges in 2D) of the mesh, and $\edgespartint$ denote the set of faces that are not located on the boundary $\partial\Omega$; for a given polyhedron (or polygon) $P \in \mathcal P$, also called a cell, let $\edgespart(P)$ be the set of faces (or edges) of $P$.
Let $t_0 = 0 < t_1 < \ldots < t_N = T$ be a partition of $(0,T)$, denoted by $\tdisc$; 
for  such a partition  $\tdisc$, we define the time step by $\delta t = \max\,\{t_{n+1} - t_n,   n \in \llbracket 0, N-1  \rrbracket \}$, 
where  $\llbracket 0, N-1 \rrbracket$ denotes the set of integers $n$ such that $0 \le n \le  N-1.$
\medskip
The unknown is supposed to be represented by a function $U \in L^\infty(\Omega\times(0,T), \xR^p)$; we emphasize that for non colocated  schemes, some unknowns are not piecewise-constant over the cells of the mesh and over the time steps. 
For instance, when using staggered discretisations in fluid flow simulations, the velocity is discontinuous along surfaces or lines included in $P$ (see the example developed in Section \ref{sec:appli}).
The discrete convection operator that we consider here takes the following form: 
\begin{align*}
\mathcal C(U) : 
& \quad
\Omega\times(0,T) 	\to \xR,
\\ & \quad 
(\bfx,t) \mapsto
\mathcal C(U)_P^n, \mbox{ for } \bfx \in P,\ P \in \mathcal P, \mbox{ and } t \in (t_n,t_{n+1}),\ n \in \llbracket 0, N-1 \rrbracket,
\end{align*}
with
\[
\mathcal C(U)_P^n = (\eth_t \beta)_P^n + \frac 1 {|P|} \sum_{\zeta \in \edgespart(P)} |\zeta|\ \bfF_\zeta^n \cdot \bfn_{P,\zeta},
\]
where $\bigl\{\beta_P^n,\ P \in \mathcal P,\ n \in \llbracket 0, N \rrbracket \bigr\}$ is a family of real numbers,
\begin{equation}
(\eth_t \beta)_P^n = \frac{\beta_P^{n+1} - \beta_P^n}{t_{n+1} - t_n},\ n \in \llbracket 0, N-1 \rrbracket,
\label{eqdef:dtbeta}
\end{equation}
and $\bigl\{\bfF_\zeta^n,\ \zeta \in \edgespart,\ n \in \llbracket 0, N-1 \rrbracket \bigr\}$ is a family of real vectors of $\xR^d$.
Note that this form of the flux implies that the scheme is conservative.
Of course, the real numbers $\bigl\{\beta_P^n,\ P \in \mathcal P,\ n \in \llbracket 0, N \rrbracket \bigr\}$ and $\bigl\{\bfF_\zeta^n,\ \zeta \in \edgespart,\ n \in \llbracket 0, N-1 \rrbracket \bigr\}$ are related to the unknown $U$; it is the object of Theorem \ref{th:lw} below to state precisely the assumptions that must be satisfied by these quantities to ensure the consistency of the discrete convection operator.

\begin{theorem}[Weak consistency for a multi-dimensional conservative convection operator] \label{th:lw}
Under the assumptions \eqref{pb-general}, let $(\mathcal P\exm,\tdisc\exm)_\mnn$ be a sequence of possibly non uniform space-time discretisations, with $\delta(\mathcal P\exm)$ and $\delta t\exm$ tending to zero as $m\to +\infty$, and let $(U\exm)_\mnn$ be the associated sequence of discrete functions.
We suppose that the sequence $(U\exm)_\mnn$ is bounded and converges to a limit:
\begin{align} \label{lemgen:linfbound} &
\exists \ C^u \in \xR_+^\ast \mbox{ s.t. } \Vert U\exm \Vert_\infty \le C^u,\ \forall \mnn,
\\ \label{lemgen:l1conv} &
\exists \ \bar U \in L^\infty(\Omega\times(0,T), \xR^p) \mbox{ s.t. } \Vert U\exm - \bar U \Vert_{L^1(\Omega\times(0,T), \xR^p)} \to 0 \mbox{ as } m \to +\infty.
\end{align}
We also assume that the family $\{ (\beta\exm)_P^n,\ P \in \mathcal P\exm,\ n \in \llbracket 0, N\exm-1 \rrbracket,\ \mnn \}$ is bounded.
In addition, let $U_0 \in L^\infty(\Omega,\xR^p)$ and let us suppose that, as $m \to + \infty$,
\begin{align} \label{hyp:condi} &
\sum_{P \in \mathcal P_{\mathrm{int}}\exm} \int_P \bigl((\beta\exm)_P^0 - \beta(U_0)(\bfx) \bigr)\ \varphi(\bfx) \dx \to 0, \mbox{ for any } \varphi \in C_c^\infty(\Omega),
\\ \label{hyp:t} &
\sum_{n=1}^{N\exm} \sum_{P \in \mathcal P_{\mathrm{int}}\exm} \int_{t_n-1}^{t_n} \int_P \Bigl( (\beta\exm)_P^n - \beta(U\exm)(\bfx,t) \Bigr)\ \varphi(\bfx,t) \dx \dt \to 0,
\mbox{ for any } \varphi \in C_c^\infty\bigl(\Omega\times[0,T)\bigr),
\\ \label{hyp:x} &
\sum_{n=0}^{N\exm-1} \sum_{P \in \mathcal P_{\mathrm{int}}\exm} \frac {\diam(P)}{|P|}
\sum_{\zeta \in \edgespart(P)} |\zeta|\ \int_{t_n}^{t_{n+1}} \int_P \Bigl| \Big((\bfF\exm)_{\zeta}^n -\bff(U^m)(\bfx,t) \Big) \cdot \bfn_{P,\zeta} \Bigr| \dx \dt \to 0, 
\end{align}
where $\mathcal P_{\mathrm{int}}\exm$ denotes the set of cells of $ \mathcal P\exm$ that have no face or edge on the boundary $\partial \Omega$.
Then, for any $\varphi \in C_c^\infty(\Omega\times[0,T))$,
\begin{multline} \label{lw}
\int_0^T \int_\Omega \mathcal C\exm(U\exm)\ \interpm(\varphi)(\bfx,t)  \dx \dt 
\to -\int_\Omega \beta(U_0)(\bfx)\ \varphi (\bfx,0) \dx
\\
- \int_0^T \int_\Omega \Big(\beta(\bar U)(\bfx,t)\ \partial_t \varphi(\bfx,t) + \ \bff(\bar U)(\bfx,t) \cdot \gradi \varphi(\bfx,t) \Big) \dx \dt \quad \mbox{as } m \to + \infty,
\end{multline}
where $\interpm(\varphi)$ is an interpolate of $\varphi$ defined a.e. by 
\begin{multline} \label{eq:interp_phi} \qquad
\interpm(\varphi)(\bfx,t) =\varphi_P^n \mbox{ for }\bfx \in P \mbox{ and } t \in (t_n, t_{n +1}),
\\
\mbox{with }\varphi_P^n = \frac 1 {|P|} \int_P \varphi(\bfx,t_n)\dx,\quad \mbox{for } P \in \mathcal P \mbox{ and } n \in \llbracket 0, N \rrbracket.
\qquad \end{multline}
\end{theorem}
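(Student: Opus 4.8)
The plan is to expand the left-hand side of \eqref{lw} into a double sum over cells and time steps, split it into a discrete time term and a discrete divergence term, and treat each separately (this is what makes the argument insensitive to the implicit/explicit nature of the scheme). Since $\interpm(\varphi)$ and $\mathcal C\exm(U\exm)$ are both piecewise constant on $P\times(t_n,t_{n+1})$,
\[
\int_0^T\!\!\int_\Omega \mathcal C\exm(U\exm)\,\interpm(\varphi)\dx\dt
= \sum_{n=0}^{N\exm-1}\sum_{P\in\mathcal P\exm} |P|\,(t_{n+1}-t_n)\,\mathcal C(U)_P^n\,\varphi_P^n
=: T_1+T_2,
\]
where $T_1$ gathers the contributions of $(\eth_t\beta)_P^n$ and $T_2$ those of the flux sum. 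Throughout, I would use that $\varphi\in C_c^\infty(\Omega\times[0,T))$: for $m$ large, every non-interior cell and every time index close to $N\exm$ carries $\varphi_P^n=0$ (its closure misses $\mathrm{supp}\,\varphi$ once $\delta(\mathcal P\exm)$ is below $\mathrm{dist}(\mathrm{supp}_\bfx\varphi,\partial\Omega)$), so the sums may be silently restricted to $\mathcal P_{\mathrm{int}}\exm$ and to $n\le N\exm-1$, which is exactly the range where \eqref{hyp:condi}--\eqref{hyp:x} apply. I would also record once and for all that $\beta(U\exm)\to\beta(\bar U)$ and $\bff(U\exm)\to\bff(\bar U)$ in $L^1$: this follows from the continuity of $\beta,\bff$, the bound \eqref{lemgen:linfbound}, and the convergence \eqref{lemgen:l1conv}, via the standard ``continuous composition'' lemma (extract an a.e.\ convergent subsequence, apply continuity pointwise, dominate by a constant on the ball of radius $C^u$).

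For $T_1$, using \eqref{eqdef:dtbeta} one has $T_1=\sum_{P}|P|\sum_{n=0}^{N\exm-1}\big(\beta_P^{n+1}-\beta_P^{n}\big)\varphi_P^n$ (I drop the superscript $(m)$ for legibility). A discrete summation by parts in $n$ produces an initial term $-\sum_P|P|\,\beta_P^0\,\varphi_P^0$, a discrete time-derivative term $-\sum_P|P|\sum_{n}\beta_P^n(\varphi_P^n-\varphi_P^{n-1})$, and a top boundary term that vanishes for $m$ large. Since $|P|\varphi_P^0=\int_P\varphi(\bfx,0)\dx$, the initial term equals $-\sum_P\int_P\beta_P^0\,\varphi(\bfx,0)\dx$ and converges to $-\int_\Omega\beta(U_0)\,\varphi(\bfx,0)\dx$ by \eqref{hyp:condi} applied to $\varphi(\cdot,0)\in C_c^\infty(\Omega)$. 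For the time-derivative term I would write $|P|(\varphi_P^n-\varphi_P^{n-1})=\int_{t_{n-1}}^{t_n}\!\int_P\partial_t\varphi$, so it reads $-\sum_n\sum_P\beta_P^n\int_{t_{n-1}}^{t_n}\!\int_P\partial_t\varphi$; invoking \eqref{hyp:t} with the test function $\partial_t\varphi\in C_c^\infty(\Omega\times[0,T))$ shows that replacing $\beta_P^n$ by $\beta(U\exm)(\bfx,t)$ costs $o(1)$, leaving $-\int_0^T\!\int_\Omega\beta(U\exm)\,\partial_t\varphi$, which tends to $-\int_0^T\!\int_\Omega\beta(\bar U)\,\partial_t\varphi$ by the $L^1$ convergence of $\beta(U\exm)$.

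The divergence term $T_2$ is the crux, and it is where the nonstandard flux-consistency hypothesis \eqref{hyp:x}, with its crucial weight $\diam(P)/|P|$, comes into play. I would introduce the face averages $\varphi_\zeta^n=\frac1{|\zeta|}\int_\zeta\varphi(\bfx,t_n)\,\mathrm{d}s$ and add and subtract $\varphi_\zeta^n$. The part carrying $\varphi_\zeta^n$ vanishes by conservativity: reordering as a sum over faces, each interior face receives opposite normals $\bfn_{P,\zeta}=-\bfn_{Q,\zeta}$ from its two cells together with the same $\varphi_\zeta^n$, while boundary faces lie outside $\mathrm{supp}\,\varphi$. Hence $T_2=\sum_n(t_{n+1}-t_n)\sum_P\sum_{\zeta\in\edgespart(P)}|\zeta|\,\bfF_\zeta^n\cdot\bfn_{P,\zeta}\,(\varphi_P^n-\varphi_\zeta^n)$. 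Replacing $\bfF_\zeta^n$ by the space-time cell average $\bff_P^n$ of $\bff(U\exm)$ over $P\times(t_n,t_{n+1})$ leaves a remainder bounded by $\|\gradi\varphi\|_\infty$ times the sum in \eqref{hyp:x}: indeed $(t_{n+1}-t_n)\,|(\bfF_\zeta^n-\bff_P^n)\cdot\bfn_{P,\zeta}|\le\frac1{|P|}\int_{t_n}^{t_{n+1}}\!\int_P|(\bfF_\zeta^n-\bff(U\exm))\cdot\bfn_{P,\zeta}|$, while $|\varphi_P^n-\varphi_\zeta^n|\le\diam(P)\,\|\gradi\varphi\|_\infty$ supplies exactly the factor $\diam(P)$, so this remainder tends to $0$ by \eqref{hyp:x}. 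In the main term I would use $\sum_{\zeta}|\zeta|\bfn_{P,\zeta}=0$ to delete $\varphi_P^n$ and the exact discrete Green formula $\sum_{\zeta}|\zeta|\,\varphi_\zeta^n\,\bfn_{P,\zeta}=\int_P\gradi\varphi(\bfx,t_n)\dx$ to obtain $-\sum_n\sum_P(t_{n+1}-t_n)\,\bff_P^n\cdot\int_P\gradi\varphi(\cdot,t_n)$; replacing the cell-averaged gradient and the time slice $t_n$ by pointwise values costs $O\big(\delta(\mathcal P\exm)+\delta t\exm\big)$ (smoothness of $\varphi$, boundedness of $\bff$), after which the $L^1$ convergence of $\bff(U\exm)$ yields the limit $-\int_0^T\!\int_\Omega\bff(\bar U)\cdot\gradi\varphi$.

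Summing the limits of $T_1$ and $T_2$ gives precisely the right-hand side of \eqref{lw}. The genuinely delicate point, and the one I would write out most carefully, is the divergence step: the whole scheme hinges on the observation that the $O(\diam P)$ gap between the cell average $\varphi_P^n$ and the face average $\varphi_\zeta^n$ is exactly what absorbs the $\diam(P)/|P|$ weight in \eqref{hyp:x}, once conservativity ($\bfn_{P,\zeta}=-\bfn_{Q,\zeta}$) and the cell identity $\sum_\zeta|\zeta|\bfn_{P,\zeta}=0$ have been used to isolate the flux-consistency defect. This is the mechanism that replaces the usual mesh-regularity assumptions and allows non-piecewise-constant (e.g.\ staggered) unknowns.
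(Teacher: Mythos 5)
Your proposal is correct and follows essentially the same route as the paper's proof: the same splitting into a discrete time part and a discrete flux part, discrete summation by parts plus \eqref{hyp:condi} and \eqref{hyp:t} for the former, and conservativity ($\bfn_{P,\zeta}=-\bfn_{Q,\zeta}$), the bound $|\varphi_P^n-\varphi_\zeta^n|\le C_\varphi\,\diam(P)$ combined with \eqref{hyp:x}, and the exact discrete Green formula for the latter. The only deviations are cosmetic (and slightly cleaner): you use the exact identity $|P|(\varphi_P^n-\varphi_P^{n-1})=\int_{t_{n-1}}^{t_n}\int_P\partial_t\varphi\dx\dt$ so that \eqref{hyp:t} applies directly with test function $\partial_t\varphi$, where the paper passes through the piecewise-constant time derivative $\eth_t\exm\varphi$ and its uniform convergence, and you phrase the flux replacement via the space-time cell average of $\bff(U\exm)$ rather than the paper's pointwise decomposition $A_P^n=B_P^n(\bfx,t)+R_P^n(\bfx,t)$ followed by averaging over $P\times(t_n,t_{n+1})$, which amounts to the same computation.
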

 
Before we give the proof of Theorem \ref{th:lw}, let us first briefly comment on its assumptions.

\begin{remark}[Flux consistency]\label{rem:flux}
The required flux consistency is stated by Equation \eqref{hyp:x}, which requires for the flux $(\bfF\exm)_{\zeta}^n$ through a face $\zeta$ of a cell $P$ to be close to the mean value over $P$ of the actual flux function $\bff$ applied to the unknown.
For a scheme involving only cell unknowns, for instance, the quantity $(\bfF\exm)_{\zeta}^n$ is generally a function of the unknowns in the cell $P$ and in the neighbouring cells, and checking the assumption \eqref{hyp:x} amounts to bound the difference between the unknowns and their translates.
Note that, while Theorem \ref{th:lw} holds for very general meshes, as we have already mentioned in the introduction, some regularity assumptions on the sequence of meshes may be required at this step.\\
To clarify this point, let us consider a simple one-dimensional problem for the scalar unknown $u$, with $\beta(u) = f(u) = u$, leading to the linear convection operator  $\mathcal C(u) = \partial_t u + \partial_x u$, which we discretise with the first-order upwind scheme.
Then, for $\bfx \in P$ and $t \in (t_n,t_{n+1})$, $|((\bfF\exm)_{\zeta}^n -\bff(U^m)(\bfx,t)) \cdot \bfn_{P,\zeta}| = |(u\exm)_{P^-}^n -(u\exm)_P^n|$ where $P^-$ is the left cell to $P$ when $\zeta$ is its left face, and $|((\bfF\exm)_{\zeta}^n -\bff(U^m)(\bfx,t)) \cdot \bfn_{P,\zeta}| = 0$ otherwise (disregarding the boundary cells thanks to Remark \ref{rem:bound} below).
Checking Assumption \eqref{hyp:x} thus consists in proving that the term $R\exm$ defined by
\[
R\exm = \sum_{n=0}^{N\exm-1} (t_{n+1} - t_n) \sum_{P \in \mathcal P\exm} \diam(P)\ |u^n_P -u^n_{P^-}|
\]
tends to zero  as $m$ tends to $+\infty$.
This is implied by the convergence in $L^1(\Omega \times (0,T))$ of the sequence of discrete solutions provided that the ratio $|P|/|P^-|$ is bounded independently of $m$ for the sequence of meshes under consideration \cite[Section 4]{gal-19-wea}.
A more elaborate example of application, using a staggered grid, is provided in Section \ref{sec:appli} below.
\end{remark}

\begin{remark}[Disregarding boundary cells in Assumption \eqref{hyp:x}] \label{rem:bound}
Since the support of the test function $\varphi$ is compact in $\Omega \times [0,T)$, for $\delta(\mathcal P\exm)$ small enough, $\varphi$ vanishes in the boundary cells.
Consequently, it is clear from the proof of the theorem below (see the expression \eqref{eq:def_X2} of the term $X_2\exm $) that boundary cells may be excluded in the sum in Assertion \eqref{hyp:x}.
This is the reason why only the cells in $\mathcal P_{\mathrm{int}}\exm$ are considered in Assumption \eqref{hyp:x}.
For numerical fluxes involving wider stencils (for instance in the case of higher order schemes), one could in fact reduce the set of cells involved furthermore.
\end{remark}

\begin{remark}[Regularity of $\beta$ and $\bff$] \label{rem:reg_func}
The proof of Theorem \ref{th:lw} holds if $\beta$ and $\bff$ are only continuous functions, which is the assumption made in the present section; however, to prove Assertions \eqref{hyp:t} and \eqref{hyp:x}, a locally Lipschitz continuity is often required, as in Section \ref{sec:appli}.
\end{remark}

\begin{remark}[Stronger convergence assumptions on  $\{ (\beta\exm)_{m\in \xN}\}$] \label{rem:old--theo}
In most situations, stronger convergence properties hold for $ (\beta\exm)_{m\in \xN}$, namely the weak convergence assumptions \eqref{hyp:condi} and \eqref{hyp:t} are implied by the following stronger asssumptions:
 \begin{align*}	&
 \sum_{P \in \mathcal P_{\mathrm{int}}\exm}   \int_P |(\beta\exm)_P^0  - \beta(U_0(\bfx)) | \dx  \to 0 \mbox{ as } m \to + \infty, \mbox{ with } U_0 \in L^\infty(\Omega,\xR^p),
 \\ &
 \sum_{n=0}^{N_m-1}  \sum_{P \in \mathcal P_{\mathrm{int}}\exm} \int_{t_n}^{t_{n+1}} \int_P |(\beta\exm)_P^n  - \beta(U\exm(\bfx,t)) | \dx \dt \to 0 \mbox{ as } m \to + \infty.
\end{align*}
This is the case, for instance, for the convection operator considered in Section \ref{sec:appli} below.
However, there are cases where the convergence of $\beta$ is only weak, see for instance the reconstructed kinetic energy for the full compressible Euler equations in \cite{her-20-cons}.
\end{remark}

\begin{remark}[On the interpolate of the test function] \label{rem:interpm}
Note that in the definition \eqref{eq:interp_phi} of $\interpm(\varphi)$ in \eqref{lw}, the quantities $\varphi_P^n,\ n \in \llbracket 0, N \rrbracket$, may be also defined as
\[
\varphi_P^n = \frac 1 {|P|} \int_P \varphi(\bfx,t_{n+1}) \dx,
\]
with minor changes in the arguments of the present section, essentially a slightly different assumption \eqref{hyp:t}, which reads:
\[
\sum_{n=1}^{N\exm} \sum_{P \in \mathcal P_{\mathrm{int}}\exm} \int_{t_n-1}^{t_n} \int_P \Bigl( (\beta\exm)_P^{n-1} - \beta(U\exm)(\bfx,t) \Bigr)\ \varphi(\bfx,t) \dx \dt \to 0,
\mbox{ for any } \varphi \in C_c^\infty\bigl(\Omega\times[0,T)\bigr).
\]
For instance, for a scalar problem, if the discrete function is defined as $u(\bfx,t)=u^{n-1}_P$ for $\bfx \in P$ and $t \in [t_{n-1},t_n)$ (choice often used in explicit schemes) and $\beta_P^{n-1}$ is defined in the scheme as $\beta(u^{n-1}_P)$, this assumption is trivially satisfied, since$ (\beta\exm)_P^{n-1} = \beta(U\exm)(\bfx,t)$ in $P \times (t_{n-1},t_n)$, while checking the original assumption \eqref{hyp:t} needs to bound the time translates of the discrete solution.
This is however an easy task, under a very mild regularity assumption for the time discretisation (see Section \ref{sec:appli} below).
The opposite situation occurs (\ie\ this is Assumption \eqref{hyp:t} which is now trivially satisfied) if the discrete function is defined as $u(\bfx,t)=u^n_P$ for $\bfx \in P$ and $t \in [t_{n-1},t_n)$, which is often done for implicit schemes.
\end{remark}

\begin{proof}[Proof of Theorem \ref{th:lw}]
Theorem \ref{th:lw} is the consequence of the two following lemmas, which prove respectively the convergence of the time derivative part and the space derivative part. 
Let us decompose 
\begin{align} \nonumber &
\int_0^T \int_\Omega \mathcal C\exm(U\exm)\ \interpm(\varphi)(\bfx,t)  \dx \dt = X_1\exm + X_2\exm, \mbox{ with }
\\ \label{eq:def_X1} & \hspace{20ex}
X_1\exm = \sum_{n=0}^{N\exm-1} (t_{n+1} - t_n) \sum_{P \in \mathcal P\exm } (\eth_t \beta\exm)_P^n\ \varphi_P^n,
\\ \label{eq:def_X2} & \hspace{20ex}
X_2\exm = \sum_{n=0}^{N\exm-1} (t_{n+1} - t_n) \sum_{P \in \mathcal P\exm }\ \sum_{\zeta \in \edgespart(P)} |\zeta|\ (\bfF\exm)_\zeta^n \cdot \bfn_{P,\zeta}\ \varphi_P^n.
\end{align}
Then, by Lemma \ref{lem:time-cons} below,
\[
X_1\exm \to - \int_\Omega \beta(U_0)(\bfx) \dx
- \int_0^T \int_\Omega \beta(\bar U)(\bfx,t)\ \partial_t \varphi (\bfx,t) \dx \dt \quad \mbox{as } m\to +\infty,
\]
and by Lemma \ref{lem:space-cons} below, 
\[
X_2\exm \to - \int_0^T \int_\Omega \bff(\bar U)(\bfx,t) \cdot \gradi \varphi (\bfx,t) \dx \dt 
\quad \mbox{as } m\to +\infty,
\]
which concludes the proof.
\end{proof}
%
%
\medskip
\begin{lemma}[Weak consistency, time derivative] \label{lem:time-cons}
Let the sequence $(X_1\exm)_\mnn$ be defined by \eqref{eq:def_X1}.
Then, under the assumptions and notations of Theorem \ref{th:lw}, 
\[
X_1\exm \to - \int_\Omega \beta(U_0)(\bfx)\ \varphi (\bfx,0) \dx
- \int_0^T \int_\Omega \beta(\bar U)(\bfx,t)\ \partial_t \varphi(\bfx,t) \dx \dt \quad \mbox{as } m\to +\infty.
\]
\end{lemma}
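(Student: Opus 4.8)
The plan is to carry out a discrete integration by parts (Abel summation) in the time index, so as to transfer the discrete time difference onto the smooth test function. First I would unfold $X_1\exm$ as the time–derivative part of $\int_0^T\int_\Omega \mathcal C\exm(U\exm)\,\interpm(\varphi)\dx\dt$: inserting the definition \eqref{eqdef:dtbeta} of $(\eth_t\beta\exm)_P^n$ and integrating in space, the factor $t_{n+1}-t_n$ cancels and the cell volume $|P|$ produced by $\int_P\dx$ appears, giving
\[
X_1\exm = \sum_{P \in \mathcal P\exm} |P| \sum_{n=0}^{N\exm-1} \bigl((\beta\exm)_P^{n+1} - (\beta\exm)_P^n\bigr)\, \varphi_P^n.
\]
Summing by parts in $n$ for each fixed cell $P$ produces a boundary contribution at $n = N\exm$ proportional to $\varphi_P^{N\exm-1}$, an initial contribution $-|P|\,(\beta\exm)_P^0\,\varphi_P^0$, and a bulk term $-|P|\sum_{n=1}^{N\exm-1}(\beta\exm)_P^n(\varphi_P^n - \varphi_P^{n-1})$. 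Since $\varphi$ is compactly supported in $\Omega\times[0,T)$ and $\delta t\exm \to 0$, for $m$ large $\varphi$ vanishes on the last time slab, so $\varphi_P^{N\exm-1}=0$ and the boundary term drops.

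Next I would identify the limit of the initial contribution. Using $|P|\,\varphi_P^0 = \int_P \varphi(\bfx,0)\dx$, it equals $-\sum_{P}\int_P (\beta\exm)_P^0\, \varphi(\bfx,0)\dx$. Taking $\varphi(\cdot,0)\in C_c^\infty(\Omega)$ as test function in \eqref{hyp:condi}, and noting that for $m$ large boundary cells carry no mass (so the sum over $\mathcal P\exm$ agrees with the one over $\mathcal P_{\mathrm{int}}\exm$), the difference with $\sum_P\int_P \beta(U_0)\,\varphi(\cdot,0)\dx = \int_\Omega \beta(U_0)\,\varphi(\cdot,0)\dx$ tends to zero; hence this contribution converges to $-\int_\Omega \beta(U_0)(\bfx)\,\varphi(\bfx,0)\dx$. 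For the bulk term I would rewrite the time increment of the interpolate as $\varphi_P^n - \varphi_P^{n-1} = \frac 1{|P|}\int_P\int_{t_{n-1}}^{t_n}\partial_t\varphi(\bfx,t)\dt\dx$, turning the bulk term into $-\int_0^T\int_\Omega B\exm(\bfx,t)\,\partial_t\varphi(\bfx,t)\dx\dt$, where $B\exm$ is the piecewise–constant interpolate equal to $(\beta\exm)_P^n$ on $P\times(t_{n-1},t_n)$ (its value on the final slab is irrelevant, since $\partial_t\varphi$ vanishes there for $m$ large).

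It then remains to show $\int_0^T\int_\Omega B\exm\,\partial_t\varphi\dx\dt \to \int_0^T\int_\Omega \beta(\bar U)\,\partial_t\varphi\dx\dt$, which I would do via the splitting $B\exm - \beta(\bar U) = (B\exm - \beta(U\exm)) + (\beta(U\exm) - \beta(\bar U))$. Tested against $\partial_t\varphi \in C_c^\infty(\Omega\times[0,T))$, the integral of the first difference is exactly the sum in \eqref{hyp:t} (the $n=N\exm$ slab and the boundary cells contribute nothing, as $\partial_t\varphi$ vanishes there for $m$ large), hence tends to zero by assumption. The integral of the second difference is bounded by $\Vert\partial_t\varphi\Vert_\infty\,\Vert\beta(U\exm) - \beta(\bar U)\Vert_{L^1}$, and the main obstacle is precisely to prove $\beta(U\exm)\to\beta(\bar U)$ in $L^1(\Omega\times(0,T))$: this is where the uniform bound \eqref{lemgen:linfbound} and the $L^1$ convergence \eqref{lemgen:l1conv} must be combined. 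Since $\beta$ is continuous, it is uniformly continuous on the closed ball of radius $C^u$, which contains the ranges of all $U\exm$ and of $\bar U$ (as $\Vert\bar U\Vert_\infty\le C^u$); splitting $\Omega\times(0,T)$ into the region where $|U\exm-\bar U|\le\delta$, where the modulus of continuity controls $|\beta(U\exm)-\beta(\bar U)|$, and its complement, whose measure is at most $\Vert U\exm-\bar U\Vert_{L^1}/\delta$ and on which $\beta$ stays bounded, yields $\Vert\beta(U\exm) - \beta(\bar U)\Vert_{L^1}\to 0$. Collecting the initial and bulk limits gives the claimed convergence of $X_1\exm$.
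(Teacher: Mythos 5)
Your proof is correct and follows essentially the same route as the paper's: a discrete summation by parts in time (with the terminal term killed by the compact support of $\varphi$ in $[0,T)$), identification of the initial term through \eqref{hyp:condi}, and a splitting of the bulk term into a part controlled by \eqref{hyp:t} plus a part handled by \eqref{lemgen:linfbound}, \eqref{lemgen:l1conv} and the continuity of $\beta$. The only (harmless) differences are presentational: by using the exact identities $|P|\,\varphi_P^0=\int_P\varphi(\bfx,0)\dx$ and $\varphi_P^n-\varphi_P^{n-1}=\frac1{|P|}\int_P\int_{t_{n-1}}^{t_n}\partial_t\varphi(\bfx,t)\dt\dx$ you invoke \eqref{hyp:condi} and \eqref{hyp:t} directly with the fixed test functions $\varphi(\cdot,0)$ and $\partial_t\varphi$, whereas the paper works with piecewise-constant interpolates (the function equal to $\varphi_P^0$ on each cell, and $\eth_t\exm\varphi$) together with their uniform convergence and the assumed boundedness of the family $(\beta\exm)_P^n$; you also write out the proof that $\beta(U\exm)\to\beta(\bar U)$ in $L^1$, a step the paper states without detail.
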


\begin{proof}
By the definition \eqref{eqdef:dtbeta} of $\eth_t^n \beta_P\exm (\bfx,t)$ and thanks to a discrete integration by parts, we get that
\[
X_1\exm = - \sum_{P \in \mathcal P\exm} |P| \ (\beta\exm)_P^0\ \varphi_P^0
- \sum_{n=1}^{N\exm} (t_n - t_{n-1}) \sum_{P \in \mathcal P\exm} |P|\ (\beta \exm)_P^n\ \frac{\varphi_P^n - \varphi_P^{n-1}}{t_n - t_{n-1}}.
\]
On the one hand, the piecewise-constant function equal to $\varphi_P^0$  on each cell $P\in \mathcal P\exm$ converges to $\varphi(\bfx,0)$ in $L^\infty(\Omega)$ as $m$ tends to $+\infty$.
On the other hand, assumption \eqref{hyp:condi} states the weak convergence, in the distributional sense, of the function $(\beta\exm)^0$ defined by $(\beta\exm)^0(\bfx)=(\beta\exm)^0$ for $\bfx \in P,\ P\in \mathcal P\exm$ to the function $\beta(U_0)$.
In addition, $(\beta\exm)^0$ is supposed to be bounded.
We thus have:
\[
- \sum_{P \in \mathcal P\exm} |P| \ (\beta\exm)_P^0\ \varphi_P^0 \to - \int_\Omega \beta(U_0)(\bfx)\ \varphi (\bfx,0) \dx \quad \mbox{as } m\to +\infty.
\]
Let the piecewise constant function $\eth_t\exm \varphi : \Omega\times(0,T) \to \xR^d$ be defined by 
\[\eth_t\exm \varphi(\bfx,t) = \dfrac{\varphi_P^{n+1} - \varphi_P^n}{t_{n+1} - t_n} \mbox{ for }(\bfx,t) \in P \times (t_n,t_{n+1}).\]
The function $\eth_t\exm \varphi$ converges uniformly to $\partial_t \varphi$ in $L^\infty(\Omega \times (0,T))$.
The second term of $X_1\exm$ may be decomposed as
\[
-\sum_{n=1}^{N\exm} (t_n - t_{n-1}) \sum_{P \in \mathcal P\exm} |P|\ (\beta \exm)_P^n\ \frac{\varphi_P^n - \varphi_P^{n-1}}{t_n - t_{n-1}}= Y_1\exm + Y_2\exm  
\]
with
\begin{align*} 
& 
Y_1\exm = -\sum_{n=1}^{N\exm} \sum_{P \in \mathcal P\exm} \int_{t_{n-1}}^{t_n} \int_P \Bigl( (\beta\exm)_P^n - \beta(U\exm)(\bfx,t) \Bigr)\ \eth_t\exm \varphi(\bfx,t) \dx \dt,
\\ &
Y_2\exm = - \int_0^T \int_\Omega \beta(U\exm)(\bfx,t)\ \eth_t\exm \varphi(\bfx,t) \dx \dt.
\end{align*}
Invoking the assumption \eqref{hyp:t} and the uniform convergence of $\eth_t\exm \varphi$ to $\partial_t\varphi$ , we thus get that the sequence $(Y_1\exm)_\mnn$ tends to zero.
On the other hand, thanks to the assumptions \eqref{lemgen:linfbound}, \eqref{lemgen:l1conv} and the regularity of $\beta$, we get that
\[
\lim_{m \to +\infty} X_1\exm = \lim_{m \to +\infty} Y_2\exm = 
- \int_0^T \int_\Omega \beta(\bar U)(\bfx,t)\ \partial_t \varphi(\bfx,t) \dx \dt.
\]
\end{proof}
%
%

\medskip
\begin{lemma}[Weak consistency, space derivative] \label{lem:space-cons}
Let the sequence $(X_2\exm)_\mnn$ be defined by \eqref{eq:def_X2}.
Then, under the assumptions and notations of Theorem \ref{th:lw},
\[
X_2\exm \to - \int_0^T \int_\Omega \bff(\bar U)(\bfx,t) \cdot \gradi \varphi(\bfx,t) \dx \dt \quad \mbox{as } m\to +\infty.
\]
\end{lemma}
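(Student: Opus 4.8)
The plan is to mimic the structure of the proof of Lemma \ref{lem:time-cons}: isolate a principal term that converges to the target integral and a remainder controlled by the flux-consistency assumption \eqref{hyp:x}. The main tool is the pair of exact geometric identities, valid on any cell $P$: first, $\sum_{\zeta\in\edgespart(P)}|\zeta|\,\bfn_{P,\zeta}=\mathbf 0$ (divergence theorem applied to a constant field); and second, introducing the face mean $\varphi_\zeta^n = \frac1{|\zeta|}\int_\zeta\varphi(\bfx,t_n)\,\mathrm d\gamma(\bfx)$, the discrete gradient identity $\sum_{\zeta\in\edgespart(P)}|\zeta|\,\varphi_\zeta^n\,\bfn_{P,\zeta}=\int_P\gradi\varphi(\bfx,t_n)\dx$ (divergence theorem applied to $\varphi(\cdot,t_n)$ times the canonical basis vectors).

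First I would split the cell value $\varphi_P^n$ appearing in \eqref{eq:def_X2} as $\varphi_P^n = \varphi_\zeta^n + (\varphi_P^n-\varphi_\zeta^n)$. The contribution of the face means $\varphi_\zeta^n$ vanishes identically: regrouping the cell--face double sum face by face and using conservativity ($\bfF_\zeta^n$ single valued, $\bfn_{P,\zeta}=-\bfn_{Q,\zeta}$ for the two cells $P,Q$ adjacent to an interior face $\zeta$) makes every interior face cancel, while boundary faces carry $\varphi_\zeta^n=0$ since, for $m$ large, $\delta(\mathcal P\exm)$ is smaller than the distance from the support of $\varphi$ to $\partial\Omega$ (as in Remark \ref{rem:bound}). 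Hence $X_2\exm = \sum_n(t_{n+1}-t_n)\sum_P\sum_{\zeta\in\edgespart(P)}|\zeta|\,\bfF_\zeta^n\cdot\bfn_{P,\zeta}\,(\varphi_P^n-\varphi_\zeta^n)$, and the same vanishing argument lets me restrict the cell sum to $\mathcal P_{\mathrm{int}}\exm$.

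Next I would insert the pointwise flux by writing $\bfF_\zeta^n = \bff(U\exm)(\bfx,t) + \bigl(\bfF_\zeta^n - \bff(U\exm)(\bfx,t)\bigr)$ and rewriting $(t_{n+1}-t_n)\,\bfF_\zeta^n$ as the average $\frac1{|P|}\int_{t_n}^{t_{n+1}}\int_P \bfF_\zeta^n\dx\dt$ (licit, as $\bfF_\zeta^n$ is constant on $P\times(t_n,t_{n+1})$). This splits $X_2\exm = C\exm + E\exm$. In the remainder $E\exm$ the difference $\bfF_\zeta^n - \bff(U\exm)$ is weighted by $\varphi_P^n-\varphi_\zeta^n$; since both are averages of $\varphi(\cdot,t_n)$ over subsets of $P$, I bound $|\varphi_P^n-\varphi_\zeta^n|\le \diam(P)\,\|\gradi\varphi\|_\infty$, which produces exactly the weight $\diam(P)/|P|$ of \eqref{hyp:x}, whence $E\exm\to0$. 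In the principal term $C\exm$ the factor $\bff(U\exm)$ no longer depends on $\zeta$, so I may factor it out of the $\zeta$-sum and apply the two geometric identities to get $\sum_{\zeta\in\edgespart(P)}|\zeta|(\varphi_P^n-\varphi_\zeta^n)\bfn_{P,\zeta} = -\int_P\gradi\varphi(\cdot,t_n)\dx$. This turns $C\exm$ into $-\int_0^T\int_\Omega \bff(U\exm)\cdot G\exm\dx\dt$, where $G\exm$ is the piecewise-constant field equal on $P\times(t_n,t_{n+1})$ to the cell mean of $\gradi\varphi(\cdot,t_n)$, and $G\exm\to\gradi\varphi$ uniformly by smoothness of $\varphi$.

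Finally I would pass to the limit in $C\exm$: from \eqref{lemgen:linfbound}, \eqref{lemgen:l1conv} and the local uniform continuity of $\bff$, one gets $\bff(U\exm)\to\bff(\bar U)$ in $L^1(\Omega\times(0,T))$, and since $G\exm\to\gradi\varphi$ uniformly with uniformly bounded norm, the product converges, giving $C\exm\to -\int_0^T\int_\Omega\bff(\bar U)\cdot\gradi\varphi\dx\dt$; combined with $E\exm\to 0$ this is the claim. The main obstacle, and the reason the statement is delicate on general meshes, is to arrange the algebra so that the remainder carries \emph{precisely} the weight $\diam(P)/|P|$ of \eqref{hyp:x} and no more: this is exactly what forces the introduction of the face means $\varphi_\zeta^n$ and the use of $\sum_{\zeta}|\zeta|\bfn_{P,\zeta}=\mathbf 0$, since the naive bound $|\varphi_P^n|\le\|\varphi\|_\infty$ would only yield the useless weight $1/|P|$ and would fail to close the argument.
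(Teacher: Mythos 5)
Your proposal is correct and follows essentially the same route as the paper's proof: the paper likewise introduces the face means $\varphi_\zeta^n$, uses conservativity together with the identity $\sum_{\zeta\in\edgespart(P)}|\zeta|\,\bfn_{P,\zeta}=0$ and the divergence theorem to turn the principal term into $-\int_0^T\!\!\int_\Omega\bff(U\exm)\cdot\gradi\exm\varphi\dx\dt$ (which converges by the boundedness and $L^1$ convergence of $U\exm$ and the uniform convergence of the piecewise-constant gradient), and bounds the remainder by $C_\varphi\,\diam(P)$ times the quantity controlled by Assumption \eqref{hyp:x}. The only difference is one of ordering, not substance: the paper restricts the sum to $\mathcal P_{\mathrm{int}}\exm$ first and then inserts the face means, whereas you do the reverse.
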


\begin{proof}
Since $\varphi$ is compactly supported and since $\delta (\mathcal P\exm) \to 0$ as $m \to 0$, there exists $M \in \xN$ such that for $m \ge M$, $\varphi_P^n = 0$ for all $\bfx \in \mathcal P\exm \setminus \mathcal P_{\mathrm{int}}\exm$.
Moreover, since for a face $\zeta$ separating $P$ and $P'$, one has $\bfn_{P,\zeta}= - \bfn_{P',\zeta}$, we get that
\[
X_2\exm
= \sum_{n=0}^{N\exm-1} (t_n - t_{n-1}) \sum_{P \in \mathcal P_{\mathrm{int}}\exm}\ \sum_{\zeta \in \edgespart(P)} |\zeta| \ (\bfF\exm)_\zeta^n \cdot \bfn_{P,\zeta}\ \varphi_P^n
= \sum_{n=0}^{N\exm-1} (t_n - t_{n-1}) \sum_{P_{\mathrm{int}} \in \mathcal P\exm} A_P^n
\]
with
\[
A_P^n =\sum_{\zeta \in \edgespart(P)} |\zeta|\ (\bfF\exm)_{\zeta}^n \cdot \bfn_{P,\zeta} \ (\varphi_P^n -\varphi_\zeta^n),
\]
where $\varphi_\zeta^n$ denotes the mean value of $\varphi(\bfx,t_n)$ over $\zeta$.
Now, for any $\bfx \in P$, $t \in [t_n, t_{n+1})$, we can decompose $A_P^n$ as $A_P^n = B_P^n(\bfx,t)+R_P^n(\bfx,t)$ with
\begin{align*} &
B_P^n(\bfx,t) = \sum_{\zeta \in \edgespart(P)} |\zeta|\ \bff(U\exm)(\bfx,t) \cdot \bfn_{P,\zeta} \ (\varphi_P^n -\varphi_\zeta^n),
\\ &
R_P^n(\bfx,t) = \sum_{\zeta \in \edgespart(P)} |\zeta|\ \Bigl( (\bfF\exm)_\zeta^n - \bff(U\exm)(\bfx,t) \Bigr) \cdot \bfn_{P,\zeta} \ (\varphi_P^n -\varphi_\zeta^n).
\end{align*}
Since $\displaystyle \sum_{\zeta \in \edgespart(P)} |\zeta|\ \bfn_{P,\zeta} =0$, we have
\begin{multline} \label{bnp}
B_P^n(\bfx,t) = -\sum_{\zeta \in \edgespart(P)} |\zeta|\ \bff(U\exm)(\bfx,t) \cdot \bfn_{P,\zeta} \ \varphi_\zeta^n
= - |P|\ \bff(U\exm)(\bfx,t) \cdot (\gradi \varphi)_P^n, 
\\
\mbox{with } (\gradi \varphi)_P^n = \dfrac 1 {|P|} \sum_{\zeta \in \edgespart(P)} |\zeta|\ \varphi_\zeta^n\ \bfn_{P,\zeta} = \dfrac 1 {|P|} \int_P \gradi \varphi(\bfx,t_n) \dx.
\end{multline}
Note that the piecewise constant function $\gradi\exm \varphi:\ \Omega\times(0,T) \to \xR^d$ defined by
\[
  \gradi\exm \varphi(\bfx,t) = (\gradi \varphi)_P^n \mbox{ for } (\bfx,t) \in P \times (t_n,t_{n+1})
\]
converges uniformly to $\gradi \varphi$ in $L^\infty(\Omega \times (0,T))^d$. 
Since, by definition of $B_P^n(\bfx,t)$ and $R_P^n(\bfx,t)$,
\[
A_P^n = \frac 1 {(t_{n+1} - t_n) \ |P|} \Bigl(\int_{t_n}^{t_{n+1}} \int_P B_P^n(\bfx,t) \dx \dt + \int_{t_n}^{t_{n+1}} \int_P R_P^n(\bfx,t) \dx \dt \Bigr),
\]
we get
\begin{multline} \label{eq:X2fBP}
X_2\exm =\sum_{n=0}^{N\exm-1} \sum_{P \in \mathcal P_{\mathrm{int}}\exm} \frac 1 {|P|} 
\Bigl(\int_{t_n}^{t_{n+1}} \int_P B_P^n(\bfx,t) \dx \dt + \int_{t_n}^{t_{n+1}} \int_P R_P^n(\bfx,t) \dx \dt \Bigr)
\\
= -\int_0^T \int_\Omega \bff(U\exm)(\bfx,t) \cdot \gradi\exm \varphi(\bfx,t) \dx \dt + 
\sum_{n=0}^{N\exm-1} \sum_{P \in \mathcal P_{\mathrm{int}}\exm} \frac 1 {|P|} \int_{t_n}^{t_{n+1}} \int_P R_P^n(\bfx,t) \dx \dt.
\end{multline}
Owing to the boundedness and convergence assumptions on $U\exm$ and to the uniform convergence of $\gradi\exm \varphi$ to $\gradi \varphi$, the first term tends to $\displaystyle -\int_0^T \int_\Omega \bff(\bar U)(\bfx,t) \cdot \gradi \varphi(\bfx,t) \dx \dt$ as $m \to +\infty$.
Since $|\varphi_\zeta^n - \varphi_P^n | \leq C_\varphi\, \diam(P)$, with $C_\varphi$ depending only on $\varphi$, we get, for any $\bfx \in P$ and $t \in (t_n,t_{n+1})$,
\[
|R_P^n(\bfx,t)| \leq C_\varphi\ \sum_{\zeta \in \edgespart(P)} |\zeta|\ \Bigl| \Bigl( (\bfF\exm)_\zeta^n - \bff(U\exm)(\bfx,t) \Bigr) \cdot \bfn_{P,\zeta} \Bigr|\ \diam(P).
\]
The second term of the right-hand side of Relation \eqref{eq:X2fBP} thus tends to $0$ as $m \to +\infty$ thanks to the assumption \eqref{hyp:x}, which concludes the proof.
\end{proof}
%
%
\section{An example of application for staggered discretisations} \label{sec:appli}

The interest of Theorem \ref{th:lw} lies in the fact that it may deal with terms combining several variables, associated to different meshes and time discretisations.
A typical exemple of a such a case is the balance equation for the entropy in barotropic compressible flows \eqref{euler-baro-entropie}, where the entropy $E$ is a nonlinear function of the density $\rho$ and the velocity $\bfu$ which, in staggered discretisation, are approximated on different meshes, and may also be evaluated at different time levels.
Hence, Theorem \ref{th:lw} is a suitable tool to prove the consistency of this equation.
In this section, we focuss on a similar but simpler problem, namely a staggered discretisation of a convection operator combining the time derivative of the function of a single scalar variable and a space divergence term, with a flux obtained as the product of another function of this scalar variable with the velocity.

\medskip
We suppose that $\Omega$ is an open bounded polyhedral set of $\xR^2$, and consider the following convection operator:
\begin{align} \label{def:op-conv_ex}
\mathcal C(\bar U) : &\quad \Omega\times(0,T) 	\to \xR,  \nonumber
\\ & \quad 
(\bfx,t) \mapsto \partial_t(\beta(\bar q))(\bfx,t) + \dive\bigl(g(\bar q)\,\bar \bfv \bigr)(\bfx,t),
\end{align}
with $\bar U = (\bar q,\bar \bfv) \ : \Omega \times (0,T) \rightarrow \xR \times \xR^2$, $\bff(\bar U) = \bff(\bar q,\bar \bfv) = g(\bar q)\,\bar \bfv$, where $\beta:\ \xR \rightarrow \xR$ and $g :\ \xR \rightarrow \xR$ are locally Lipschitz-continuous real functions.
Note that, for instance, the convection term of Equation \eqref{euler-baro-mass} may be written as \eqref{def:op-conv_ex} with $\bar U = (\bar \rho, \bar \bfu)$, $\beta(s) = s$ and $g(s) = s$.

\medskip
In order to discretise this convection operator, we consider two types of staggered arrangements. 
In both arrangements, the scalar unknowns are located at the center of the cells. 
However, they differ in the use of the vector unknowns.
The first discretisation uses the whole velocity vector unknown on each edge of the mesh; this corresponds to the Rannacher-Turek (RT) discrete unknowns in the finite element setting \cite{ran-92-sim}.
The second discretisation uses only the normal component of the velocity on each edge; this latter arrangement of the discrete unknowns is very often referred to as the Marker-and-Cell (MAC) scheme \cite{har-65-num}.
Hence we will refer to the first arrangement as the RT case, and the second as the MAC case.
Such discretisations are called staggered and are widely used in computational fluid dynamics; an example of the implementation of a staggered discretisation for the solution of the barotropic and full Euler equations may be found {\it e.g.} in \cite{her-18-cons, her-20-cons}.

\medskip
We suppose that the mesh is composed either of general quadrangles (RT case), or of rectangles (MAC case).
We recall that $\edgespart$ stands for the set of edges of the mesh, and the internal edge separating the cells $P$ and $Q$ is denoted by $\zeta=P|Q$.
This mesh will be referred to in the following as the primal mesh.

\medskip
We also introduce now one or two dual meshes, depending on the case.
\begin{list}{-}{\itemsep=0.5ex \topsep=0.5ex \leftmargin=1.cm \labelwidth=0.3cm \labelsep=0.5cm \itemindent=0.cm}
\item{\it  RT case - }
In this case, the (unique) dual mesh consists in  a new partition of $\Omega$ indexed by the elements of $\edgespart$, \ie\ $\Omega=\cup_{\zeta \in \edgespart} D_\zeta$.
For an internal edge $\zeta=P|Q$, the set $D_\zeta$ is supposed to be a subset of $P \cup Q$ and we define $D_{P,\zeta}=D_\zeta \cap P$, so that $D_\zeta=D_{P,\zeta} \cup D_{Q,\zeta}$ (see Figure \ref{fig:rt-space_disc}); for an external edge $\zeta$ of a cell $P$, $D_\zeta$ is a subset of $P$, and $D_\zeta=D_{P,\zeta}$.
The cells $(D_\zeta)_{\zeta \in \edgespart}$ are referred to as the dual or diamond cells, and $D_{P,\zeta}$ as half dual cells or half diamond cells.
For a rectangular cell $P$, we define $D_{P,\zeta}$ as the simplex having the mass center of $P$ as vertex and the edge $\zeta$ as basis; this definition is extended to general primal meshes by supposing that $|D_{P,\zeta}|$ is still equal to $|P|/4$ and that the sub-cells connectivities (\ie\ the way the half-dual cells share a common edge) is left unchanged.
Note that the actual geometry of the dual cells does not need to be specified (and a dual cell may not be a polytope, a dual edge being possibly curved).

\begin{figure}[htb]
\begin{center}
\scalebox{0.9}{
\begin{tikzpicture} 
\fill[color=bclair,opacity=0.3] (1.,2.) -- (4.,1.) -- (5.,3.) -- (3.,4.) -- (1.,2.);
\fill[color=orangec!30!white,opacity=0.3] (5.,3.) -- (6.,5.) -- (3.,6.) -- (3.,4.);
\draw[very thick, color=black] (1.,2.) -- (4.,1.) -- (5.,3.) -- (3.,4.) -- (1.,2.);
\draw[very thick, color=black] (5.,3.) -- (6.,5.) -- (3.,6.) -- (3.,4.);
\draw[very thick, color=black] (1.,2.) -- (0.,1.5);
\draw[very thick, color=black] (4.,1.) -- (4.5,0.);
\draw[very thick, color=black] (3.,4.) -- (1.5,4.2);
\draw[very thick, color=black] (5.,3.) -- (6,3);
\draw[very thick, color=black] (6.,5.) -- (6.,6.);
\draw[very thick, color=black] (6.,5.) -- (7.,5.);
\draw[very thick, color=black] (3.,6.) -- (3.5,7.);
\draw[very thick, color=black] (3.,6.) -- (2.,6.);
\node at (3., 2.5){$\mathbf P$}; \node at (4.5, 4.7){$\mathbf Q$};
\node at (3., 2.5){$\mathbf P$}; \node at (4.5, 4.7){$\mathbf Q$};
\draw[very thick, color=black] (5.,3.) -- (3.,4.) node[midway,sloped,above]{$\zeta=P|Q$};
\fill[color=bclair,opacity=0.3] (10.,4.) .. controls (10.1,3.) .. (10.3,2.2) .. controls (11.,2.6) .. (12.,3.);
\fill[color=orangec!30!white,opacity=0.3] (10.,4.) .. controls (10.8,5.) .. (11.2,4.8) .. controls (11.6,4.7) .. (12.,3.);
\draw[very thick, color=black] (8.,2.) -- (11.,1.) -- (12.,3.) -- (10.,4.) -- (8.,2.);
\draw[very thick, color=black] (12.,3.) -- (13.,5.) -- (10.,6.) -- (10.,4.);
\draw[very thick, color=black] (8.,2.) -- (7.,1.5);
\draw[very thick, color=black] (11.,1.) -- (11.5,0.);
\draw[very thick, color=black] (10.,4.) -- (8.5,4.2);
\draw[very thick, color=black] (12.,3.) -- (13,3);
\draw[very thick, color=black] (13.,5.) -- (14.,5.);
\draw[very thick, color=black] (13.,5.) -- (13.,6.);
\draw[very thick, color=black] (10.,6.) -- (10.5,7.);
\draw[very thick, color=black] (10.,6.) -- (9.,6.);
\draw[thick, color=bfonce] (10.,4.) .. controls (10.1,3.) .. (10.3,2.2);
\draw[thick, color=bfonce] (10.3,2.2) .. controls (11.,2.6) .. (12.,3.);
\draw[thick, color=bfonce] (8.,2.) .. controls (9.,2.3) .. (10.3,2.2);
\draw[thick, color=bfonce] (11.,1.) .. controls (10.8,2.) .. (10.3,2.2);
\draw[thick, color=orangec!80!black] (10.,4.) .. controls (10.8,5.) .. (11.2,4.8) .. controls (11.6,4.7) .. (12.,3.);
\draw[thick, color=orangec!80!black] (13.,5.) -- (11.2,4.8);
\draw[thick, color=orangec!80!black] (10.,6.) -- (11.2,4.8);
\draw[very thick, color=black] (12.,3.) -- (10.,4.) node[midway,sloped,above]{$D_{Q,\zeta}$} node[midway,sloped,below]{$D_{P,\zeta}$};
\end{tikzpicture}
}
\end{center}
\caption{Primal and dual meshes and associated notations for the quadrangular mesh and Rannacher-Turek like unknowns.}
\label{fig:rt-space_disc}
\end{figure}
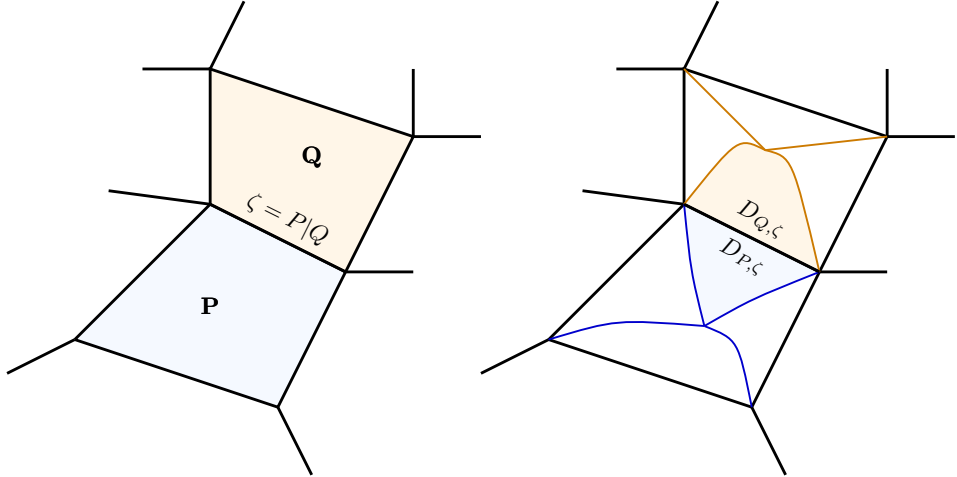

\item{\it MAC case -}
In this case, two dual meshes are considered, each  consisting in a partition of $\Omega$ indexed by the vertical and horizontal elements of $\edgespart$, \ie\ $\Omega=\cup_{\zeta \in \edgespart^{(i)}} D_\zeta$, $i=1,2$, where $\edgespart^{(1)}$ (resp. $\edgespart^{(2)}$) denotes the set of vertical (resp. horizontal) edges.
The cells $(D_\zeta)_{\zeta \in \edgespart}$ are still referred to as the dual cells.
They are no longer diamond shaped; indeed, a half dual cell $D_{P,\zeta}$ is now half of the rectangle $P$ with side $\zeta$ (see Figure \ref{fig:mac-space_disc}). 
As in the former case, for an internal edge $\zeta=P|Q$, the dual cell $D_\zeta$ is the subset of $P \cup Q$ defined as $D_\zeta=D_{P,\zeta} \cup D_{Q,\zeta}$; for an external edge $\zeta$ of a cell $P$, $D_\zeta$ is a subset of $P$, and $D_\zeta=D_{P,\zeta}$.

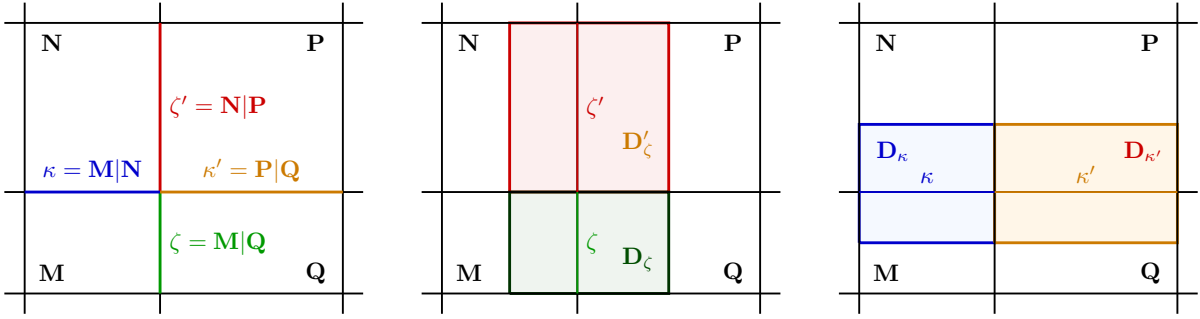
\begin{figure}[htb]
\begin{center}
\scalebox{0.9}{
%
\begin{tikzpicture} 
\draw[thick, color=black] (0.7, 0.5) -- (6.,0.5) ;
\draw[thick, color=black] (0.7, 2.) -- (6.,2.) ;
\draw[thick, color=black] (0.7, 4.5) -- (6.,4.5) ;
\draw[thick, color=black] (1.,0.2) -- (1.,4.8) ;
\draw[thick, color=black] (3.,0.2) -- (3., 4.8) ;
\draw[thick, color=black] (5.7,0.2) -- (5.7,4.8) ;
\node at (1.4,0.8){$\mathbf M$}; \node at (1.4,4.2){$\mathbf N$}; \node at (5.3,4.2){$\mathbf P$}; \node at (5.3,0.8){$\mathbf Q$};
%
\draw[very thick, color=green!60!black] (3.,0.5) -- (3.,2.) node[midway, right]{$ \mathbf \zeta = \mathbf M | \mathbf Q$} ;
\draw[very thick, color=rougef] (3.,2.) -- (3.,4.5) node[midway, right]{$ \mathbf \zeta' = \mathbf N | \mathbf P$} ;
\draw[very thick, color=bfonce] (1.,2.) -- (3.,2.) node[midway, above]{$ \mathbf \kappa = \mathbf M | \mathbf N$} ;
\draw[very thick, color=orangec!80!black] (3.,2.) -- (5.7,2.) node[midway, above]{$ \mathbf \kappa' = \mathbf P| \mathbf Q$} ;
\end{tikzpicture}
%
\hspace{0.6cm} 
\begin{tikzpicture} 
\fill[color=rougef!20!white,opacity=0.3] (2.,2.) -- (2.,4.5) -- (4.35,4.5) -- (4.35,2.) -- (2.,2.);
\draw[very thick, color=rougef] (2.,2.) -- (2.,4.5) -- (4.35,4.5) -- (4.35,2.) -- (2.,2.);
\fill[color=green!30!black!20!white,opacity=0.3] (2.,0.5) -- (2.,2.) -- (4.35,2.) -- (4.35,0.5) -- (2.,0.5);
\draw[very thick,   color=green!20!black] (2.,0.5) -- (2.,2.) -- (4.35,2.) -- (4.35,0.5) -- (2.,0.5);
\node[color=green!30!black] at (3.9, 1.){$ {\mathbf D_\zeta }$};
\node[color=orangec!80!black] at (3.9, 2.7){$ {\mathbf D_\zeta' }$};

\draw[thick, color=black] (0.7, 0.5) -- (6.,0.5) ;
\draw[thick, color=black] (0.7, 2.) -- (6.,2.) ;
\draw[thick, color=black] (0.7, 4.5) -- (6.,4.5) ;
\draw[thick, color=black] (1.,0.2) -- (1.,4.8) ;
\draw[thick, color=black] (3.,0.2) -- (3., 4.8) ;
\draw[thick, color=black] (5.7,0.2) -- (5.7,4.8) ;
\node at (1.4,0.8){$\mathbf M$}; \node at (1.4,4.2){$\mathbf N$}; \node at (5.3,4.2){$\mathbf P$}; \node at (5.3,0.8){$\mathbf Q$};
%
\draw[thick, color=rougef] (3.,2.) -- (3.,4.5) node[midway,right]{$\zeta'$} ;
\draw[thick, color=green!60!black] (3.,0.5) -- (3.,2.) node[midway,right]{$\zeta$} ;
\end{tikzpicture}
%
\hspace{0.6cm} 
\begin{tikzpicture} 
\fill[color=bclair,opacity=0.3] (1.,1.25) -- (1.,3.) -- (3.,3.) -- (3.,1.25) -- (1.,1.25);
\fill[color=orangec!30!white,opacity=0.3] (3.,1.25) -- (3.,3.) -- (5.7,3.) -- (5.7,1.25) -- (3.,1.25);
\draw[very thick,  color=bfonce]  (1.,1.25) -- (1.,3.) -- (3.,3.) -- (3.,1.25) -- (1.,1.25);
\draw[very thick,  color=orangec!80!black] (3.,1.25) -- (3.,3.) -- (5.7,3.) -- (5.7,1.25) -- (3.,1.25);
\node[color=bfonce] at (1.5, 2.6){$ {\mathbf D_\kappa }$};
\node[color=rougef] at (5.2, 2.6){$ {\mathbf D_{\kappa' }}$};

\draw[thick, color=black] (0.7, 0.5) -- (6.,0.5) ;
\draw[thick, color=black] (0.7, 2.) -- (6.,2.) ;
\draw[thick, color=black] (0.7, 4.5) -- (6.,4.5) ;
\draw[thick, color=black] (1.,0.2) -- (1.,4.8) ;
\draw[thick, color=black] (3.,0.2) -- (3., 4.8) ;
\draw[thick, color=black] (5.7,0.2) -- (5.7,4.8) ;
\node at (1.4,0.8){$\mathbf M$}; \node at (1.4,4.2){$\mathbf N$}; \node at (5.3,4.2){$\mathbf P$}; \node at (5.3,0.8){$\mathbf Q$};
%
\draw[thick, color=bfonce] (1.,2.) -- (3.,2.) node[midway,above]{$\kappa $} ;
\draw[thick, color=orangec!80!black] (3.,2.) -- (5.7,2.) node[midway,above]{$\kappa'$} ;
\end{tikzpicture}
}
\end{center}
\caption{Primal and dual meshes and associated notations for the MAC case. - Left: the primal cells; the edges $\zeta$ and $\zeta'$ belong to $\edgespart^{(1)}$ and the edges $\kappa$ and $\kappa'$  to $\edgespart^{(2)}$. - Center: the dual cells associated to  $\edgespart^{(1)}$. - Right: the dual cells associated to  $\edgespart^{(2)}$.
}
\label{fig:mac-space_disc}
\end{figure}

\end{list}

\medskip
The  scalar unknown $q$ is associated to the primal cells:
\begin{equation*}  
q(\bfx,t) = q_P^n \quad \mbox{for } \bfx \in P,\ P \in \mathcal P, \ t \in [t_n,t_{n+1}),\ n \in \llbracket 0, N-1 \rrbracket.
\end{equation*}
The unknowns corresponding to the vector-valued unknown $\bfv$ are located at the center of the edges in the RT case; in the MAC case, the unknowns associated to the $i$-th component of $\bfv$ are located at the center of the edges of the $i$-th dual mesh.
Hence the associated approximate vector function reads:
\begin{list}{-}{\itemsep=0.5ex \topsep=0.5ex \leftmargin=1.cm \labelwidth=0.3cm \labelsep=0.5cm \itemindent=0.cm}
\item RT case -- the whole vector unknown is associated to each dual cell :
\[
\bfv(\bfx,t) = \bfv_\zeta^n \quad \mbox{for } \bfx \in D_\zeta,\ \zeta \in \edgespart,\ t \in [t_n,t_{n+1}),\ n \in \llbracket 0, N-1 \rrbracket.
\]
\item MAC case -- The $i$-th component of the vector unknown is associated to the cells of the $i$-th dual mesh, so that $\bfv(\bfx,t) = (v_1(\bfx,t),\ v_2(\bfx,t))^t$  where, for $i=1,\ 2$,
\[  
v_i(\bfx,t) = v_\zeta^n, \mbox{ for } \bfx \in D_\zeta,\ \zeta \in \edgespart^{(i)} \mbox{ and } t \in [t_n,t_{n+1}),\ n \in \llbracket 0, N-1 \rrbracket.
\]
\end{list}

\medskip
Let $\bfe^{(i)}$ denote the $i$-th unit vector; with the notations of the previous section, the considered discrete convection operator reads:
\begin{multline*}
\mathcal C_{\mathcal P}(q,\bfv)_P^n = (\eth_t \beta)_P^n + \frac 1 {|P|} \sum_{\zeta \in \edgespart(P)} |\zeta|\ \bfF_\zeta^n \cdot \bfn_{P,\zeta},\mbox{ with } 
\beta_P^n = \beta(q_P^n)\mbox{ and } \bfF_\zeta^n =  \bff(q_\zeta^n,\bfv_\zeta^n ) =   g(q_\zeta^n) \ \bfv_\zeta^n  
\\
\mbox{ where }\bfv_\zeta^n  \mbox { is } 
\begin{cases}
\mbox {  the vector of discrete unknowns} & \mbox { in the RT case}, \\
\mbox {  defined as } v_\zeta^n  \ \bfe^{(i)} \mbox{ for } \zeta \in \edgespart^{(i)},\ i = 1 \mbox{ or } 2, & \mbox{ in the MAC case},
\end{cases}
\end{multline*}
and, for $\zeta=P|Q$, $q_\zeta^n$ stands for a convex combination of $q_P^n$ and $q_Q^n$.
The initial value for the scalar unknowm $q$ is defined by
\begin{equation}\label{eq:cond_ini}
q_P^0 = \frac 1 {|P|}\ \int_P q_0(\bfx) \dx. 
\end{equation}

\medskip
The consistency result for the discrete convection operator is given in the next lemma; it uses the following regularity parameters of the mesh:
\[
\theta_1(\mathcal P) = \max_{P \in \mathcal P} \frac{\diam(P)^2}{|P|}, \quad
\theta_2(\mathcal P) = \max \Bigl\{ \frac{|P|}{|Q|},\ P \mbox{ and } Q \mbox{ adjacent cells of } \in \mathcal P \Bigr\}.
\]
Note that in the MAC case (in fact, for a Cartesian grid), the regularity parameter $\theta_1(\mathcal P)$ controls the ratio between the two dimensions (\ie\ the height and the width) of a cell.
For a rectangular computational domain, with thus observe that the ratio $|\zeta|/|\zeta'|$, for $(\zeta,\zeta') \in (\edgespart^{(i)})^2$, $i = 1$, $2$, is bounded by $\theta_1(\mathcal P)^2$, which is a quasi-uniformity property of the mesh.
This also implies that $\theta_2(\mathcal P) \leq \theta_1(\mathcal P)^2$, and so the second regularity parameter is useless.
It may easily be checked that similar relations holds for a general MAC scheme, \ie\ a union of matching Cartesian grids, with powers of $\theta_1(\mathcal P)$ possibly higher than $2$.
Hence, the regularity of a MAC mesh (or of a Cartesian grid) may be equivalently measured by
\[
\theta(\mathcal P) = \max \Bigl \{\frac{\bar h^{(1)}}{\underline h^{(2)}},\ \frac{\bar h^{(2)}}{\underline h^{(1)}} \Bigr \},
\]
with, for $i = 1$, $2$, $\bar h^{(i)}=\max \{ |\zeta|,\ \zeta \in \edgespart^{(i)} \}$ and $\underline h^{(i)}=\min \{ |\zeta|,\ \zeta \in \edgespart^{(i)} \}$.

\medskip
We also measure the regularity of the time discretisation by the parameter $\theta_3(\tdisc)$ defined by
\[
\theta_3(\tdisc) = \max_{1 \leq n \leq N-1} \Bigl\{ \frac{t_{n+1}-t_n}{t_n-t_{n-1}}, \frac{t_n-t_{n-1}}{t_{n+1}-t_n} \Bigl\}.
\]

\begin{lemma}[Consistency] \label{lem:cons_stag}
Let a sequence of discretisations $(\mathcal P\exm)_\mnn$ and $(\tdisc\exm)_\mnn$ be given, with $\delta(\mathcal P\exm)$ and $\delta t\exm$ tending to zero, and let $(q\exm,\bfv\exm)_\mnn$ be the associated sequence of discrete functions.
We suppose that
\begin{equation}\label{eq:hyp_reg}
\exists\  \theta \in \xR \mbox{ such that } \max \{\theta_1(\mathcal P\exm),\ \theta_2(\mathcal P\exm),\ \theta_3(\tdisc\exm),\ m \in \xN\} \leq   \theta.
\end{equation}
We suppose that the sequences $(q\exm)_\mnn$ and $(\bfv\exm)_\mnn$ are bounded in $L^\infty(\Omega \times (0,T))$ and $L^\infty(\Omega \times (0,T))^2$ respectively, and that, when $m$ tends to $+\infty$,  they converge  in $L^p(\Omega \times (0,T))$ and $L^p(\Omega \times (0,T))^2$, $1 \leq p < +\infty$, to $\bar q \in L^\infty(\Omega \times (0,T))$ and $\bar \bfv \in L^\infty(\Omega \times (0,T))^2$ respectively.
Then, for any function $\varphi \in C^\infty_c(\Omega \times [0,T))$,
\begin{multline} \label{lw_ex}
\int_0^T \int_\Omega \mathcal C\exm(U\exm)(\bfx,t)\ \interpm(\varphi) \dx \dt \to
-\int_\Omega \beta(q_0)(\bfx)\ \varphi (\bfx,0) \dx
\\
- \int_0^T \int_\Omega \Bigl(\beta(\bar q)(\bfx,t)\ \partial_t \varphi(\bfx,t) + \ \bigl(g(\bar q)\, \bar \bfv)\bigr)(\bfx,t) \cdot \gradi \varphi(\bfx,t) \Bigr) \dx \dt
\quad \mbox{as } m \to + \infty.
\end{multline}
\end{lemma}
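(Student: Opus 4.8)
The plan is to derive Lemma~\ref{lem:cons_stag} as a direct application of Theorem~\ref{th:lw}, taking $U=(q,\bfv)$ (so $p=3$ in the notation of~\eqref{pb-general}), $\beta(U)=\beta(q)$ and $\bff(U)=g(q)\,\bfv$, for which the conclusion~\eqref{lw} is exactly the claimed~\eqref{lw_ex}. It therefore suffices to verify the hypotheses of the theorem. The $L^\infty$ bound~\eqref{lemgen:linfbound} is immediate from the $L^\infty$ bounds on $(q\exm)$ and $(\bfv\exm)$; the $L^1$ convergence~\eqref{lemgen:l1conv} towards $\bar U=(\bar q,\bar\bfv)$ is the case of exponent one of the $L^p$ convergence hypothesis on the bounded set $\Omega\times(0,T)$; and the boundedness of $\{(\beta\exm)_P^n\}=\{\beta(q_P^n)\}$ follows from the $L^\infty$ bound on $q\exm$ and the continuity of $\beta$. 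For the initial condition~\eqref{hyp:condi} I would use the strong form of Remark~\ref{rem:old--theo}: since $(\beta\exm)_P^0=\beta(q_P^0)$ with $q_P^0$ the mean value of $q_0$ over $P$ by~\eqref{eq:cond_ini}, local Lipschitz continuity of $\beta$ gives $\sum_P\int_P|\beta(q_P^0)-\beta(q_0)|\dx\le L_\beta\sum_P\int_P|q_P^0-q_0|\dx$, whose right-hand side is the $L^1$ error of the mean-value projection of $q_0\in L^\infty(\Omega)$ and thus tends to $0$ as $\delta(\mathcal P\exm)\to0$.

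For the time consistency~\eqref{hyp:t}, the piecewise-constant-in-time convention gives $q\exm(\bfx,t)=q_P^{n-1}$ for $t\in(t_{n-1},t_n)$, so $\beta(U\exm)(\bfx,t)=\beta(q_P^{n-1})$ there and the integrand equals $\bigl(\beta(q_P^n)-\beta(q_P^{n-1})\bigr)\varphi$. Bounding $\varphi$ and using the Lipschitz property of $\beta$ reduces the assertion to showing that the time-translate sum $\sum_{n=1}^{N\exm}(t_n-t_{n-1})\sum_P|P|\,|q_P^n-q_P^{n-1}|$ tends to $0$. This is the time analogue of the control recalled in Remark~\ref{rem:flux}: one splits off a smooth approximation of $\bar q$ (whose contribution is $O(\delta t\exm)$) and handles the remainder $w$ with a bound of the type $(t_n-t_{n-1})|P|\,|w_P^{n-1}|\le\theta_3(\tdisc\exm)\int_{t_{n-1}}^{t_n}\int_P|w|$, so that the time-step regularity $\theta_3(\tdisc\exm)\le\theta$ from~\eqref{eq:hyp_reg} is exactly what is needed to transfer each weight onto the correct time slab and conclude via the $L^1$ convergence of $q\exm$.

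The heart of the proof is the flux consistency~\eqref{hyp:x}. Fix $\bfx\in P$, $t\in(t_n,t_{n+1})$ and $\zeta=P|Q\in\edgespart(P)$, and let $\eta=\eta(\bfx)\in\edgespart(P)$ be the edge with $\bfx\in D_{P,\eta}$, so that $\bff(U\exm)(\bfx,t)=g(q_P^n)\,\bfv_\eta^n$ while $\bfF_\zeta^n=g(q_\zeta^n)\,\bfv_\zeta^n$. The plan is to split
\[
\bfF_\zeta^n-\bff(U\exm)(\bfx,t)=\bigl(g(q_\zeta^n)-g(q_P^n)\bigr)\,\bfv_\zeta^n+g(q_P^n)\,\bigl(\bfv_\zeta^n-\bfv_\eta^n\bigr).
\]
By local Lipschitz continuity of $g$ and the $L^\infty$ bound on $\bfv\exm$, the first (scalar) part is bounded by $C\,|q_\zeta^n-q_P^n|\le C\,|q_Q^n-q_P^n|$, the last inequality because $q_\zeta^n$ is a convex combination of $q_P^n$ and $q_Q^n$; the second part is bounded by $C\,|\bfv_\zeta^n-\bfv_\eta^n|$, a difference of velocity unknowns on two edges of the same cell $P$, hence on two adjacent dual cells. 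Integrating over $P$ produces a factor $|P|$ for the $q$-part and, for the $\bfv$-part, the factor $|D_{P,\eta}|$ (a fixed fraction of $|P|$, namely $1/4$ in the RT case and $1/2$ in the MAC case) on each half-diamond. Since $\tfrac{\diam(P)\,|\zeta|}{|P|}\le\tfrac{\diam(P)^2}{|P|}\le\theta_1(\mathcal P\exm)$, the weight carried by each edge in~\eqref{hyp:x} is $O(1)$ times a cell volume, and~\eqref{hyp:x} is thus reduced to two \emph{space}-translate controls — for $q\exm$ on the primal mesh and for $\bfv\exm$ on the dual mesh — each of the form $\sum_n(t_{n+1}-t_n)\sum_P\diam(P)\sum_\zeta|\zeta|\,|\cdot|$, which tend to $0$ as in Remark~\ref{rem:flux} using the $L^1$ convergence and the bounds $\theta_1(\mathcal P\exm),\theta_2(\mathcal P\exm)\le\theta$.

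The main obstacle is precisely this last step, in particular its velocity component. For $q\exm$ the estimate is the cell-to-neighbour control of \cite[Section~4]{gal-19-wea} transcribed to two dimensions with the weight $\diam(P)|\zeta|/|P|$ dominated by $\theta_1$. For $\bfv\exm$ one must run the same argument on the dual mesh, and the point to check carefully is that the regularity~\eqref{eq:hyp_reg} of the primal mesh forces the dual mesh (diamond cells in the RT case, half-rectangles in the MAC case) to be regular as well, so that adjacent dual cells have comparable volumes and $|D_{P,\eta}|/|P|$ is under control; this is where the geometric conventions on $D_{P,\zeta}$ and, in the MAC case, the equivalent regularity parameter $\theta(\mathcal P)$ enter. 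The remaining work — summation over the at most four edges of each cell, and the componentwise treatment of the MAC arrangement (where $\zeta$ and $\eta$ live on the $i$-th dual mesh and $\bfv$ is replaced by its $i$-th scalar component) — is routine bookkeeping.
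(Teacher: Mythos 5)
Your overall strategy is the paper's: apply Theorem \ref{th:lw} with $U=(q,\bfv)$, verify \eqref{hyp:condi} via the Lipschitz continuity of $\beta$ and the mean-value projection \eqref{eq:cond_ini}, verify \eqref{hyp:t} by reducing to a sum of time translates controlled through $\theta_3$, and verify \eqref{hyp:x} by splitting the flux error into a $q$-jump across the primal edge plus velocity differences between edges of the same cell, with weights of order $\diam(P)\,|\zeta|$ absorbed by cell volumes thanks to $\theta_1,\theta_2$, before invoking an $L^1$ translate estimate (Lemma \ref{lem:transuT}). Up to that point your proposal matches the paper's proof step by step.

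There is, however, a genuine gap in your treatment of the velocity part in the RT case. You claim that $|\bfv_\zeta^n-\bfv_\eta^n|$, for $\zeta,\eta$ two edges of the same quadrilateral $P$, is ``a difference of velocity unknowns on two edges of the same cell $P$, hence on two adjacent dual cells''. This is false when $\zeta$ and $\eta$ are \emph{opposite} edges of $P$: the half-diamonds $D_{P,\zeta}$ and $D_{P,\eta}$ then meet only at the cell centre, so the dual cells $D_\zeta$ and $D_\eta$ share no dual edge (see Figure \ref{fig:zetazetap}, right). Consequently the quantity you arrive at is not a sum of jumps across dual edges, and Lemma \ref{lem:transuT}, which sums jumps across internal faces $\edge=K|L$ of a mesh, does not apply to it as written. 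The paper closes exactly this hole by inserting an intermediate edge $\zeta''$ adjacent to both, writing $|\bfv_\zeta^n-\bfv_{\zeta'}^n|\le|\bfv_\zeta^n-\bfv_{\zeta''}^n|+|\bfv_{\zeta''}^n-\bfv_{\zeta'}^n|$, so that every opposite-edge difference becomes a bounded number of genuine dual-edge jumps, with weights $\omega_\eta=C\,\diam(P_\eta)^2$ then controlled by $|D_\zeta|,|D_{\zeta'}|\ge|P_\eta|/4$ under \eqref{eq:hyp_reg}. Alternatively, you could appeal to the generalized estimate of Lemma \ref{lem:transuT_gene}, which allows pairs of non-adjacent cells, but you would have to say so and check its hypotheses; as it stands, your reduction to ``a space-translate control on the dual mesh'' is not justified. (In the MAC case your claim is actually correct: the two opposite edges in a given direction have dual cells sharing the mid-line of $P$, which is precisely why the paper calls that case simpler.)
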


\begin{proof}
In this proof, we denote by $C_\beta$ and $C_g$ the Lipschitz modulus of $\beta$ and $g$ respectively on the interval $[\underline q, \bar q]$, where $\underline q \in \xR$ and $\bar q \in \xR$ are such that 
\[
\underline q \leq (q\exm)_P^n\leq \bar q,\ \forall P \in \mathcal P\exm,\ n \in \llbracket 0, N\exm \rrbracket,\ \forall m \in \xN.
\]

The proof of this lemma relies on Theorem \ref{th:lw}.
The consistency of the initialization with the initial condition (Assumption \eqref{hyp:condi}) follows from its definition \eqref{eq:cond_ini}; indeed, for any $\varphi \in C_c^\infty(\Omega)$,
\[
\Bigl| \sum_{P \in \mathcal P\exm} \int_P \bigl((\beta\exm)_P^0 - \beta(q_0)(\bfx) \bigr)\ \varphi(\bfx) \dx \Bigr|
\leq C_\beta\ \Vert \varphi \Vert_{L^\infty(\Omega)} \sum_{P \in \mathcal P\exm} \int_P |q_0(\bfx)-q^0_P|,
\]
and thus tends to zero for any function $q_0 \in L^1(\Omega)$.
Since $(\beta\exm)_P^n = \beta((q\exm)_P^n)$, the left-hand side of Assertion \eqref{hyp:t} reads, with $\varphi \in C_c^\infty(\Omega\times[0,T))$:
\begin{multline*}
R\exm_t=\sum_{n=1}^{N\exm} \sum_{P \in \mathcal P_{\mathrm{int}}\exm} \int_{t_n-1}^{t_n} \int_P \Bigl( (\beta\exm)_P^n - \beta(U\exm)(\bfx,t) \Bigr)\ \varphi(\bfx,t) \dx \dt
\\
= \sum_{n=1}^{N\exm} \sum_{P \in \mathcal P_{\mathrm{int}}\exm} \int_{t_n-1}^{t_n} \int_P \Bigl( \beta\bigl((q\exm)_P^n\bigr) - \beta\bigl( (q\exm)_P^{n-1} \bigr) \Bigr)\ \varphi(\bfx,t) \dx \dt.
\end{multline*}
We thus have
\[
|R\exm_t| \leq C_\beta \ \Vert \varphi \Vert_{L^\infty(\Omega\times[0,T))} \sum_{n=1}^{N\exm} (t_n - t_{n-1}) \sum_{P \in \mathcal P_{\mathrm{int}}\exm} |(q\exm)_P^n - (q\exm)_P^{n-1}|,
\]
and thus $R\exm_t$ tends to zero thanks to the assumed regularity of the sequence of time discretisations, invoking the bound of the time-translates of a converging sequence of functions of $L^1(\Omega \times (0,T))$ stated by Lemma \ref{lem:transuT} in Appendix.

\medskip
We now check Assumption \eqref{hyp:x}.
For $n \in \llbracket 0, N\exm \rrbracket$, $P \in \mathcal P\exm_{\mathrm{int}}$ and $\zeta \in \edgespart(P)$, let 
\[
R_{P,\zeta}^n = \frac 1 {|P|} \int_{t_n}^{t_{n+1}} \int_P \Bigl| \Big( (\bfF\exm)_{\zeta}^n -\bff(q\exm,\bfv\exm) (\bfx,t) \Big) \cdot \bfn_{P,\zeta} \Bigr| \dx \dt
\]
and let 
\[
R\exm = \sum_{n=0}^{N\exm-1} \sum_{P \in \mathcal P\exm_{\mathrm{int}}}   {\diam(P)}  \sum_{\zeta \in \edgespart} |  \zeta |  \ R_{P,\zeta}^n.
\]

\medskip
We now express $R_{P,\zeta}^n$, for the RT and MAC discretisations successively.
\begin{list}{-}{\itemsep=0.5ex \topsep=0.5ex \leftmargin=1.cm \labelwidth=0.3cm \labelsep=0.5cm \itemindent=0.cm}
\item RT case -- In the case of general quadrangular meshes with the whole vector unknowns located on the edges, we have
\[
\bfF\exm)_{\zeta}^n = g(q_\zeta^n)\, \bfv_\zeta^n \quad \mbox{and} \quad
\bff(q\exm,\bfv\exm)(\bfx,t) = g(q_P^n)\ \bfv_{\zeta'}^n \mbox{ for } \bfx \in D_{P,\zeta'},\ \zeta' \in \edgespart(P).
\]
We thus get, denoting by $\vert \bfa \vert$ the Euclidean norm of any vector $\bfa \in \xR^2$,
\[
\Big| \Big((\bfF\exm)_{\zeta}^n - \bff(U\exm)(\bfx,t)\Big)\cdot \bfn_{P,\zeta} \Big| = \bigl| g(q_\zeta^n)\, \bfv_\zeta^n - g(q_P^n)\ \bfv_{\zeta'}^n \bigr|
\mbox{ for } \bfx \in D_{P,\zeta'},\ \zeta' \in \edgespart(P).
\]
Let $Q$ the primal cell such that $\zeta = P|Q$.
Since $q_\zeta^n$ is a convex combination of $q_P^n$ and $q_Q^n$, we thus get, for $\bfx \in P$,  and $t \in [t_n t_{n+1})$,
\[
\Big| \Big((\bfF\exm)_{\zeta}^n - \bff(U\exm)(\bfx,t)\Big)\cdot \bfn_{P,\zeta} \Big| \leq C\ \Bigl(|q^n_P - q^n_Q| + \sum_{\zeta' \in \edgespart(P)} \vert \bfv_\zeta^n-\bfv_{\zeta'}^n \vert\Bigr),
\]
where $C$ only depends on $\Vert q\exm \Vert_{L^\infty(\Omega \times (0,T))}$, $\Vert \bfv\exm \Vert_{L^\infty(\Omega \times (0,T))^2}$ and $C_g$.
Integrating over $P \times (t_n,t_{n+1})$, we obtain
\[
R_{P,\zeta}^n \leq C\ (t_{n+1}- t_n) \Bigl(|q^n_P - q^n_Q| + \sum_{\zeta' \in \edgespart(P)} \vert \bfv_\zeta^n-\bfv_{\zeta'}^n \vert\Bigr).
\]

\medskip
\item MAC case -- In this case, the velocity components are piecewise constant on different grids.
Let $i$ be the index such that $\zeta \in \edgespart^{(i)}$, and let $\zeta'$ be the other edge of $P$ normal to $\bfe^{(i)}$, \ie\ the opposite of $\zeta$ in $P$.
We have
\[
\bfF\exm)_{\zeta}^n \cdot \bfn_{P,\zeta} = g(q_\zeta^n)\, v_\zeta^n \ \delta_\zeta \quad \mbox{and} \quad
\bff(q\exm,\bfv\exm)(\bfx,t) =
\begin{cases}
g(q_P^n) \ v_{\zeta}^n\ \delta_\zeta \mbox{ if }\bfx \in D_{P,\zeta},
\\[1ex] 
g(q_P^n) \  v_{\zeta'}^n\ \delta_\zeta \mbox{ if } \bfx \in D_{P,\zeta'}, 
\end{cases} 
\]
with $\delta_\zeta = \bfn_{P,\zeta} \cdot \bfe^{(i)}$.
We thus get
\[
\Big| \Big((\bfF\exm)_{\zeta}^n - \bff(U\exm)(\bfx,t)\Big)\cdot \bfn_{P,\zeta} \Big| =
\begin{cases}
\bigl| g(q_\zeta^n)\, \bfv_\zeta^n - g(q_P^n) \ v_{\zeta}^n \bigr| \mbox{ if }\bfx \in D_{P,\zeta},
\\[1ex] 
\bigl| g(q_\zeta^n)\, \bfv_\zeta^n - g(q_P^n) \  v_{\zeta'}^n \bigr| \mbox{ if } \bfx \in D_{P,\zeta'}, 
\end{cases} 
\]
and hence, for $\bfx \in P$,  and $t \in [t_n t_{n+1})$, denoting by $Q$ the primal cell such that $\zeta = P|Q$,
\[
\Big| \Big((\bfF\exm)_{\zeta}^n - \bff(U\exm)(\bfx,t)\Big)\cdot \bfn_{P,\zeta} \Big| \leq C\ \bigl( |q_P^n - q_Q^n | +  \ |v_\zeta^n - v_{\zeta'}^n | \bigr),
\]
where $C$ only depends on $\Vert q\exm \Vert_{L^\infty(\Omega \times (0,T))}$, $\Vert \bfv\exm \Vert_{L^\infty(\Omega \times (0,T))^2}$ and $C_g$.
Therefore, integrating over $P \times (t_n,t_{n+1})$, we finally get
\[
R_{P,\zeta}^n \leq C\ (t_{n+1}- t_n)\ \Bigl(|q^n_P - q^n_Q| +   \vert \bfv_\zeta^n-\bfv_{\zeta'}^n \vert \Bigr).
\]
\end{list}
Note that, in these computations, we have not addressed the case where $\zeta$ is an external edge, taking benefit of the fact that, in the expression of $R\exm$, the sum is retricted to the internal cells.

\medskip
From the definition of $R\exm $, we thus get that, for both cases, it satisfies the following inequality:
\[
R\exm \leq C\ \bigl( R_1\exm + R_2\exm \bigr),
\]
with
\begin{equation*} 
R_1\exm =  \ \sum_{n=0}^{N\exm-1} (t_{n+1} - t_n) \sum_{P \in \mathcal P\exm} \diam(P) \ \sum_{\substack{\zeta \in \edgespart(P), \\ \zeta=P|Q}} |\zeta|\ |q^n_P - q^n_Q|,
\end{equation*} 
and 
\begin{equation*} R_2\exm = 
\begin{cases} 
\displaystyle \sum_{n=0}^{N\exm-1} (t_{n+1} - t_n)  \sum_{P \in \mathcal P\exm} \diam(P) \sum_{(\zeta, \zeta') \in \edgespart(P)^2} (|\zeta| + |\zeta'|)\ \vert \bfv_\zeta^n-\bfv_{\zeta'}^n \vert
&
\mbox{in the RT case},
\\[2ex] \displaystyle
\sum_{n=0}^{N\exm-1} (t_{n+1} - t_n)  \sum_{P \in \mathcal P\exm} \diam(P) \sum_{\substack{i=1,2, \\ (\zeta, \zeta') \in \edgespart^{(i)}(P)^2}}  (|\zeta| + |\zeta'|)\ \vert v_\zeta^n-v_{\zeta'}^n \vert 
& 
\mbox{in the MAC case}.
\end{cases}
\end{equation*} 
There only remains to prove that $R_1\exm$  and $R_2\exm$ tend to zero as  $m$ tends to $+\infty$. 
Reordering the summation in $R_1\exm$, we get that 
\[
R_1\exm = \sum_{n=0}^{N\exm-1} (t_{n+1} - t_n) \sum_{P \in \mathcal P\exm}
\   \sum_{\substack{\zeta \in \edgespart(P), \\ \zeta=P|Q}} \omega_\zeta\ |q^n_P - q^n_Q|, \quad
\mbox{with } \omega_\zeta = \Bigl(\diam(P) + \diam(Q)\Bigr)\ |\zeta|.
\]
Lemma \ref{lem:transuT} states that $R_1\exm$ tends to zero if the weight $\omega_\zeta$ is controlled by both $|P|$ and $|Q|$; since we have $\omega_\zeta \leq 2 \bigl(\max(\diam(P),\diam(Q))\bigr)^2$, this is easily obtained using Assumption \eqref{eq:hyp_reg}. 

\begin{figure}[tb]
\begin{center}
\scalebox{0.9}{
\begin{tikzpicture}
\fill[color=bclair,opacity=0.3] (1.,1.) --  (2.5,2.5) -- (0.5,4.2) -- (1.,1.);
\fill[color=orangec!30!white,opacity=0.3] (1.,1.) -- (4.,1.) -- (2.5,2.5) -- (1.,1.);
\draw[very thick, color=black] (1.,1.) -- (4.,1.) node[midway,sloped,below]{$\zeta$};
\draw[very thick, color=black] (0.5,1.1) -- (1.,1.) -- (0.9,0.5) ;
\draw[very thick, color=black] (0.,4.2) -- (0.5,4.2) -- (0.5,4.7) ;
\draw[very thick, color=black] (4.1,4.5) -- (4.,4.) -- (4.5,3.9);
\draw[very thick, color=black] (4.,0.5) -- (4.,1.)-- (4.5,1.);
\draw[very thick, color=black] (1.,1.) -- (0.5,4.2) node[midway,left]{$\zeta'$};
\draw[very thin, color=black] (1.,1.) -- (2.5,2.5) node[midway,sloped,above]{$\quad \eta = \zeta|\zeta'$};
\draw[very thick, color=black] (1.,1.) -- (0.5,4.2) -- (4.,4.) -- (4.,1.)-- (1.,1.);
\draw[thick, color=bfonce] (1.,1.) -- (2.5,2.5) -- (0.5,4.2);
\draw[thick, color=orangec!80!black] (1.,1.) -- (2.5,2.5)-- (4.,1.);
\draw[thick, color=rougef!80!white] (2.5,2.5) -- (4.,4.);
\end{tikzpicture}
\hspace{2cm}
\begin{tikzpicture}
\fill[color=bclair,opacity=0.3]   (2.5,2.5) -- (0.5,4.2) -- (4.,4.) -- (2.5,2.5);
\fill[color=orangec!30!white,opacity=0.3] (1.,1.) -- (4.,1.) -- (2.5,2.5) -- (1.,1.);
\draw[very thin, color=black] (1.,1.) -- (4.,1.) node[midway,sloped,below]{$\zeta$};
\draw[very thick, color=black] (1.,1.) -- (0.5,4.2) node[midway,left]{$\zeta''$};
\draw[very thin, color=black] (0.5,4.2)-- (4.,4.) node[midway,sloped,above]{$\zeta'$};
\draw[very thin, color=black] (1.,1.) -- (2.5,2.5) node[midway,sloped,above]{$\quad \eta = \zeta|\zeta''$};
\draw[very thin, color=black] (0.5,4.2)  -- (2.5,2.5) node[midway,sloped,above]{$\quad \eta' = \zeta'|\zeta''$};
\draw[very thick, color=black] (1.,1.) -- (0.5,4.2) -- (4.,4.) -- (4.,1.)-- (1.,1.);
\draw[very thick, color=black] (0.5,1.1) -- (1.,1.) -- (0.9,0.5) ;
\draw[very thick, color=black] (0.,4.2) -- (0.5,4.2) -- (0.5,4.7) ;
\draw[very thick, color=black] (4.1,4.5) -- (4.,4.) -- (4.5,3.9);
\draw[very thick, color=black] (4.,0.5) -- (4.,1.)-- (4.5,1.);
\draw[thick, color=bfonce] (4.,4.) -- (2.5,2.5) -- (0.5,4.2) ;
\draw[thick, color=orangec!80!black] (1.,1.) -- (2.5,2.5)-- (4.,1.) ;
\end{tikzpicture}
}
\end{center}
\caption{Left: the primal edges $\zeta$ and $\zeta'$ are adjacent. Right: the primal edges $\zeta$ and $\zeta'$ are opposite.}
\label{fig:zetazetap}
\end{figure}
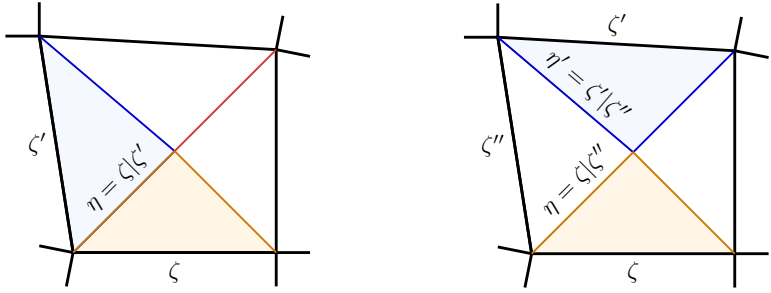

\medskip
As to the term $R_2\exm$, let us start by the RT case. 
We distinguish two cases for the pairs $(\zeta, \zeta') \in \edgespart(P)^2$ that appear in the summation: either the dual cells $D_\zeta$ and $D_\zeta'$ share a common (dual) edge $\eta=\zeta|\zeta'  \in \edgespart^\ast$, where $\edgespart^\ast$ denotes the set of edges of the dual mesh, or they are opposite edges in the quadrilateral cell $P$; in this latter case, we may write that 
\[
|\bfv_{\zeta}^n - \bfv_{\zeta'}^n| \le |\bfv_{\zeta}^n - \bfv_{\zeta''}^n|  + |\bfv_{\zeta''}^n - \bfv_{\zeta'}^n|, 
\]
where $\zeta'' \in \edgespart(P)$ is such that the dual cell $D_{\zeta''}$  shares a common (dual) edge $\eta$ (resp.  $\eta'$) $\in  \edgespart^\ast$  with $D_{\zeta}$  (resp. $D_{\zeta'}$) as shown in Figure \ref{fig:zetazetap}.
We have:
\[
\sum_{(\zeta, \zeta') \in \edgespart(P)^2} (|\zeta| + |\zeta'|)\ \vert \bfv_\zeta^n-\bfv_{\zeta'}^n \vert \leq 2\,\diam(P) \sum_{(\zeta, \zeta') \in \edgespart(P)^2}\vert \bfv_\zeta^n-\bfv_{\zeta'}^n \vert,
\]
and, since the decompositions of the jumps needed for pairs of opposite edges make the jump between two adjacent faces appears only a bounded number of times,
\[
\sum_{(\zeta, \zeta') \in \edgespart(P)^2} (|\zeta| + |\zeta'|)\ \vert \bfv_\zeta^n-\bfv_{\zeta'}^n \vert \leq C\ \diam(P) \sum_{\eta = \zeta|\zeta'\in \edgespart^\ast(P)} \vert \bfv_\zeta^n-\bfv_{\zeta'}^n \vert,
\]
with $C$ a given integer number and $\edgespart^\ast(P)$ the edges of the dual mesh included in $P$.
We thus get
\[
R_2\exm  \le C\ \sum_{n=0}^{N\exm-1} (t_{n+1} - t_n)  \sum_{\eta = \zeta|\zeta'\in \edgespart^\ast} \diam(P_\eta)^2\ \vert \bfv_\zeta^n-\bfv_{\zeta'}^n \vert,
\]
where $P_\eta$ stands for the primal cell in which $\eta$ is included.
The right-hand side of this inequality is thus a collection of jumps across the dual edges, with, for an edge $\eta$, a weight given by
\[
\omega_\eta = C\ \diam(P_\eta)^2.
\]
Thanks to Lemma \ref{lem:transuT}, $R_2\exm$ tends to zero when $m$ tends to $+\infty$ if $\omega_\eta$ is controlled by both $|D_\zeta|$ and $|D_{\zeta'}|$; this is indeed the case thanks to Assumption \eqref{eq:hyp_reg}, since $|D_\zeta| \geq |P_\eta|/4$ and $|D_{\zeta'}| \geq |P_\eta|/4$.

\medskip
Let us now turn to the MAC case, which is in fact simpler; indeed, the differences of velocities appearing in the expression of $R_2\exm$ are all jumps across dual edges, and we may thus recast $R_2\exm$ as
\[
R_2\exm  = \sum_{n=0}^{N\exm-1} (t_{n+1} - t_n) \sum_{i=1}^2 \sum_{\eta = \zeta|\zeta'\in (\edgespart^{(i)})^\ast}  \diam(P_\eta)\ (|\zeta| + |\zeta'|)\ \vert v_\zeta^n-v_{\zeta'}^n \vert,
\]
where $(\edgespart^{(i)})^\ast$ denotes the set of edges of the $i$-th dual mesh and $P_\eta$ is the primal cell in which lies $\eta$.
We thus again have a collection of jumps across the dual edges, with, for an edge $\eta$ included in a primal cell $P_\eta$ and separating the dual cells $D_\zeta$ and $D_{\zeta'}$, a weight given by
\[
\omega_\eta =  \diam(P_\eta) \ \bigl(|\zeta| + |\zeta'|\bigr).
\]
Thus, again thanks to Lemma \ref{lem:transuT}, $R_2\exm$ tends to zero when $m$ tends to $+\infty$ since, remarking that $|D_\zeta| \geq |P_\eta|/2$, $|D_{\zeta'}| \geq |P_\eta|/2$ and $\omega_\eta \leq 2 \diam(P_\eta)^2$, the weight $\omega_\eta$ is controlled by both $|D_\zeta|$ and $|D_{\zeta'}|$ thanks to Assumption \eqref{eq:hyp_reg}.
\end{proof}

\begin{remark}[On the required regularity of the time discretisation]
The assumption $\theta_3(\tdisc\exm) \leq \theta$, for $m \in \xN$, may be avoided thanks to a different choice of the interpolation of the test function (see Remark \ref{rem:interpm}).
However, this assumption is very mild (in fact, we do not have in mind any scheme where the ratio between two consecutive time-steps is likely to blow up when refining the discretisation).
\end{remark}
%
%
\appendix
\section{Convergence of discrete functions in $L^1$}

We recall a result proven in \cite[Lemma 4.3]{gal-19-wea}.
To facilitate its use in the proof of Lemma \ref{lem:cons_stag}, it is rephrased here under a slightly general form than in \cite{gal-19-wea} (see Remark \ref{rmrk:diff} below for the differences).

\medskip
Let $\mathcal M$ be a conforming mesh of the domain $\Omega$ of $\xR^d$, $d=1,2,3$, in polygonal or polyhedral subsets, and $\tdisc=(t_i)_{i \in \llbracket 0, N \rrbracket}$ be a time discretisation of the interval $(0,T)$, \ie\ a sequence of real numbers such that $0=t_0 < \dots < t_n < \dots t_N=T$.
We denote by $\delta t_\tdisc$ the time step, defined by $\delta t_\tdisc = \max \{t_{n+1}-t_n,\ n \in \llbracket 0, N-1 \rrbracket\}$.
For $u \in L^1(\Omega \times (0,T))$, $K \in \mesh$ and $n$ such that $n \in \llbracket 0, N-1 \rrbracket$, let $u_K^n$ be the mean value of $u$ over $K \times (t_n,t_{n+1})$.
We denote by $\edgesint$ the internal faces of the mesh and the face $\edge \in \edgesint$ separating the cells $K$ and $L$ is denoted by $\edge=K|L$. 
We define the following quantity:
\begin{equation}\label{transT-u}
T_{\mesh,\tdisc} \, u= \sum_{n=0}^{N-1} (t_{n+1}-t_n) \sum_{\substack{\edge \in \edgesint \\ \edge=K|L}} \omega_\edge\ |u_K^n-u_L^n|
+ \sum_{n=0}^{N-2} \delta_{n+1/2}\ \sum_{K\in\mesh} |K|\ |u_K^{n+1}-u_K^n|,
\end{equation}
where $(\omega_\edge)_{\edge \in \edgesint}$ and $(\delta_{n+1/2})_{n \in \llbracket 0, N-2 \rrbracket}$ are two sets of non-negative weights.
We introduce the two following parameters:
\begin{equation} \label{eq:reg}
\theta_\mesh=\max_{K \in \mesh} \max_{\edge \in \edgesint(K)}\ \frac {\omega_\edge}{|K|}, \quad
\theta_{\tdisc}= \max_{n \in \llbracket 0, N-2 \rrbracket}\ \Bigl\{ \frac{\delta_{n+1/2}}{t_{n+1} - t_n},\ \frac{\delta_{n+1/2}}{t_{n+2} - t_{n+1}}\Bigl\},
\end{equation}
with $\edgesint(K)$ the set of internal faces of $K$.
We denote by $\delta(\mesh)$ the space step characterizing $\mesh$, \ie\ $\delta(\mesh) = \max_{K \in \mesh} \diam(K)$.
Then the following convergence result holds.

\begin{lemma}\label{lem:transuT}
Let $\theta >0$  and $(\mesh\exm)_{m \in \xN}$ be a sequence of meshes  and for each $m \in \xN$, $\theta_{\mesh\exm}$ defined by \eqref{eq:reg}.
We assume that $\theta_{\mesh\exm} \le \theta$ for all $m \in \xN$ and $\lim_{m \to +\infty} \delta(\mesh\exm) =0$.
We suppose that the number of faces of a cell $K \in \mesh\exm$ is bounded by $\mathcal N_\edges$, for any $m \in \xN$.
For $m \in \xN$, we suppose given a time discretisation $\tdisc\exm$, and suppose that $\delta t_{\tdisc \exm}$ also tends to zero when $m$ tends to $+\infty$, and that $\theta_{\tdisc\exm} \leq \theta$ for all $m \in \xN$.
Let $u \in L^1(\Omega\times(0,T))$ and $(u_p)_{p \in \xN}$ be a sequence of functions of $L^1(\Omega\times(0,T))$ such that $u_p \to u$ in $L^1(\Omega\times(0,T))$ as $p \to +\infty$.\\[0.5ex]
Then $T_{\mesh\exm,\tdisc\exm}\, u_p$ defined by \eqref{transT-u} tends to zero when $m$ tends to $+\infty$ uniformly with respect to $p \in \xN$.
\end{lemma}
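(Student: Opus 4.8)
The plan is to regard $T_{\mesh,\tdisc}$ as a positively homogeneous, subadditive functional on $L^1(\Omega\times(0,T))$ and to establish two estimates of opposite nature: a \emph{stability} bound, uniform in $m$, showing $T_{\mesh\exm,\tdisc\exm}(w)\le C\Vert w\Vert_{L^1(\Omega\times(0,T))}$, and a \emph{consistency} bound showing $T_{\mesh\exm,\tdisc\exm}(\psi)\to 0$ for smooth $\psi$. The two are then reconciled by density, the only delicate point being to make the final limit uniform in $p$. I begin by noting that the triangle inequality applied to the mean values $w_K^n$ gives $T_{\mesh,\tdisc}(w+\tilde w)\le T_{\mesh,\tdisc}(w)+T_{\mesh,\tdisc}(\tilde w)$ and $T_{\mesh,\tdisc}(\lambda w)=|\lambda|\,T_{\mesh,\tdisc}(w)$.

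For the stability bound I would treat the two sums in \eqref{transT-u} separately, producing a constant $C$ depending only on $\theta$ and $\mathcal N_\edges$. In the space sum, write $|w_K^n-w_L^n|\le|w_K^n|+|w_L^n|$, use $\omega_\edge\le\theta|K|$ and $\omega_\edge\le\theta|L|$ together with the bound $\mathcal N_\edges$ on the number of faces of a cell to get $\sum_{\edge=K|L}\omega_\edge|w_K^n-w_L^n|\le\theta\,\mathcal N_\edges\sum_{K}|K|\,|w_K^n|$, and conclude with Jensen's inequality $(t_{n+1}-t_n)|K|\,|w_K^n|\le\int_{t_n}^{t_{n+1}}\int_K|w|\dx\dt$ after summing over $n$. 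The time sum is handled identically, splitting $|w_K^{n+1}-w_K^n|$ and invoking $\delta_{n+1/2}\le\theta(t_{n+1}-t_n)$ and $\delta_{n+1/2}\le\theta(t_{n+2}-t_{n+1})$ so that each half is again controlled by $\Vert w\Vert_{L^1(\Omega\times(0,T))}$.

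For the consistency bound I take $\psi\in C_c^\infty(\Omega\times(0,T))$ with Lipschitz modulus $L_\psi$. For two adjacent cells at the same time level, $|\psi_K^n-\psi_L^n|\le L_\psi(\diam(K)+\diam(L))\le 2L_\psi\,\delta(\mesh\exm)$; pulling this out of the space sum and using $\sum_{\edge\in\edgesint}\omega_\edge\le\theta\,\mathcal N_\edges|\Omega|$ together with $\sum_n(t_{n+1}-t_n)=T$ bounds the space part by $C'\delta(\mesh\exm)\to0$. Likewise $|\psi_K^{n+1}-\psi_K^n|\le 2L_\psi\,\delta t\exm$ and $\sum_{n}\delta_{n+1/2}\le\theta\,T$ bound the time part by $C''\delta t\exm\to0$. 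Hence $T_{\mesh\exm,\tdisc\exm}(\psi)\to0$ as $m\to+\infty$.

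The main obstacle is the uniformity in $p$: a single smooth approximant of $u$ fails, since stability only gives $T_{\mesh\exm,\tdisc\exm}(u_p)\le T_{\mesh\exm,\tdisc\exm}(\psi)+C\Vert u_p-\psi\Vert_{L^1(\Omega\times(0,T))}$ and $\sup_p\Vert u_p-\psi\Vert_{L^1(\Omega\times(0,T))}$ need not be small. I would instead exploit that $u_p\to u$ in $L^1$: given $\varepsilon>0$, pick $P_0$ with $\Vert u_p-u\Vert_{L^1(\Omega\times(0,T))}<\varepsilon$ for $p\ge P_0$, a function $\psi_0\in C_c^\infty$ with $\Vert u-\psi_0\Vert_{L^1(\Omega\times(0,T))}<\varepsilon$, and for each of the \emph{finitely many} indices $p<P_0$ a function $\psi_p\in C_c^\infty$ with $\Vert u_p-\psi_p\Vert_{L^1(\Omega\times(0,T))}<\varepsilon$. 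Subadditivity and the stability bound then yield, for every $p\in\xN$,
\[
T_{\mesh\exm,\tdisc\exm}(u_p)\le \max\bigl\{T_{\mesh\exm,\tdisc\exm}(\psi_0),\,T_{\mesh\exm,\tdisc\exm}(\psi_1),\dots,T_{\mesh\exm,\tdisc\exm}(\psi_{P_0-1})\bigr\}+2C\varepsilon.
\]
Since the maximum is taken over finitely many smooth functions, the consistency step forces it to $0$ as $m\to+\infty$, whence $\limsup_{m\to+\infty}\sup_{p\in\xN}T_{\mesh\exm,\tdisc\exm}(u_p)\le 2C\varepsilon$; as $\varepsilon$ is arbitrary, this gives the claimed uniform convergence.
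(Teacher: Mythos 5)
Your proof is correct, but note how it sits relative to the paper: the paper does not actually prove Lemma \ref{lem:transuT} at all. It recalls the result from \cite[Lemma 4.3]{gal-19-wea} and argues, in Remark \ref{rmrk:diff}, that the proof given there carries over to the generalized weights because that proof never uses that the $\omega_\edge$ are volumes of dual cells, only that they are controlled by the volumes of the adjacent cells. Your argument is therefore a self-contained substitute for the citation, and it follows the classical stability--consistency--density pattern that the cited proof also rests on: (i) subadditivity and the $L^1$-stability bound $T_{\mesh\exm,\tdisc\exm}(w)\le C(\theta,\mathcal N_\edges)\,\Vert w\Vert_{L^1(\Omega\times(0,T))}$, obtained exactly from $\omega_\edge\le\theta|K|$, $\omega_\edge\le\theta|L|$, $\delta_{n+1/2}\le\theta(t_{n+1}-t_n)$, $\delta_{n+1/2}\le\theta(t_{n+2}-t_{n+1})$ and the bound $\mathcal N_\edges$ on the number of faces; (ii) vanishing of the functional on smooth compactly supported functions at rate $O\bigl(\delta(\mesh\exm)+\delta t_{\tdisc\exm}\bigr)$; (iii) density in $L^1$, with the finite-family-of-approximants device to obtain uniformity in $p$. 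All three steps check out; in particular your handling of the uniformity in $p$ (choosing $P_0$ from the convergence of $(u_p)_{p\in\xN}$, one smooth approximant for $u$ and one for each of the finitely many $u_p$ with $p<P_0$) correctly addresses the point that a single smooth approximant does not suffice, which is the only delicate part of the statement. What your write-up buys is that the stability step (i) is precisely the rigorous content of the paper's Remark \ref{rmrk:diff}: it makes explicit that only the ratios $\omega_\edge/|K|$ and $\omega_\edge/|L|$ enter the estimate, which is why the generalization away from dual-cell volumes is harmless. One cosmetic flaw: in the last step the symbol $\psi_0$ denotes both the approximant of $u$ and the approximant of $u_0$; rename one of them to avoid the collision.
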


\begin{remark} \label{rmrk:diff}
The difference between Lemma \ref{lem:transuT} and the formulation of the same convergence result in \cite{gal-19-wea} lies in the definition of the weight of the jumps, which is more general in Lemma \ref{lem:transuT}.
In fact,  even though the volume of the dual cells are present in the quantity of interest in \cite[Lemma 4.3]{gal-19-wea}, the proof itself does not require the introduction of a dual mesh to define the weight of the jumps through the faces featured in the definition of $T_{\mesh,\tdisc}\, u$. 
Therefore, Lemma \cite[Lemma 4.3]{gal-19-wea} readily extends to the framework of Lemma \ref{lem:transuT}.
\end{remark}

\medskip
This generalization is in most cases sufficient.
However, we may go one step further, still with minor modifications of the proof of \cite{gal-19-wea}, as follows.
Let $\mathcal S_x$ be a set of  cardinal 2 - subsets of $\mathcal M$, and $\mathcal S_t$ be a set of  cardinal 2 - subsets of $\llbracket 0, N \rrbracket$.
Let $\widetilde T_{\mesh,\tdisc}\, u$ be defined by
\begin{equation}\label{transT-u_gene}
\widetilde T_{\mesh,\tdisc}\, u= \sum_{n=0}^{N-1} (t_{n+1}-t_n) \sum_{\{K,L\} \in \mathcal S_x} \omega_{K,L}\ |u^{n+1}_L - u^{n+1}_K|
+ \sum_{\{p,q\} \in \mathcal S_t}\ \delta_{p,q}\ \sum_{K\in\mesh} |K|\ |u^p_K - u^q_K|,
\end{equation}
where $(\omega_{K,L})_{\{K,L\} \in \mathcal S_x}$ and $(\delta_{p,q})_{\{p,q\} \in \mathcal S_t}$ are two sets of non-negative weights.
We introduce the two following parameters:
\begin{equation} \label{eq:reg_gene}
\begin{array}{l} \displaystyle
\theta_\mesh=\max_{K \in \mesh} \frac 1{|K|}\ \sum_{\substack{L \in \mathcal M\\ \{K,L\} \in \mathcal S_x}} \omega_{K,L},\qquad
\theta_{\tdisc}= \max_{n \in \llbracket 0, N-1 \rrbracket}\ \frac 1 {t_{n+1} - t_n}\ \sum_{\substack{p \in\llbracket 0, N \rrbracket\\{ \{n,p\} \in \mathcal S_t}}} \delta_{n,p}.
\end{array}
\end{equation}
For $\{K,L\} \in \mathcal S_x$ and $\{p,q\}\in \mathcal S_t$, let
\[
\mathfrak d (\{K,L\}) = \max_{(\bfx,\bfy) \in K \times L} |\bfy-\bfx|, \quad \mathfrak d(\{p,q\})=\begin{cases} t_{q+1}-t_p &\mbox{ if } q >p,\\
t_{p+1}-t_q &\mbox{ otherwise } \end{cases}
\]
and let
\[
\mathfrak d(\mesh) = \max_{\{K,L\} \in \mathcal S_x} \mathfrak d (\{K,L\}),\quad \mathfrak d(\tdisc) = \max_{\{p,q\} \in \mathcal S_t} \mathfrak d (\{p,q\}).
\]
Then the following convergence result holds.

\begin{lemma}\label{lem:transuT_gene}
Let $(\mesh\exm)_{m \in \xN}$ and $(\tdisc\exm)_{m \in \xN}$ be a given sequence of meshes and time discretisations.
Let us suppose there exists $\theta > 0$ such that $\theta_{\mesh\exm} \le \theta$ and $\theta_{\tdisc\exm} \le \theta$ for all $m \in \xN$, with $\theta_{\mesh\exm}$ and $\theta_{\tdisc\exm}$ given by Equation \eqref{eq:reg_gene}.
Let us assume that $\mathfrak d(\mesh\exm)$ and $\mathfrak d(\tdisc\exm)$ tend to zero when $m$ tends to $+\infty$.
Let $u \in L^1(\Omega\times(0,T))$ and $(u_p)_{p \in \xN}$ be a sequence of functions of $L^1(\Omega\times(0,T))$ such that $u_p \to u$ in $L^1(\Omega\times(0,T))$ as $p \to +\infty$.\\[0.5ex]
Then $\widetilde T_{\mesh\exm,\tdisc\exm}\, u_p$ defined by \eqref{transT-u_gene} tends to zero when $m$ tends to $+\infty$ uniformly with respect to $p \in \xN$.
\end{lemma}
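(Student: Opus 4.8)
The plan is to follow the proof of \cite[Lemma 4.3]{gal-19-wea} (equivalently that of Lemma~\ref{lem:transuT}), the only new feature being that the jumps are now taken over arbitrary pairs of cells $\{K,L\}\in\mathcal S_x$ and arbitrary pairs of time indices $\{p,q\}\in\mathcal S_t$ instead of over adjacent cells and consecutive time levels. The whole argument rests on two ingredients: a uniform $L^1$-continuity estimate $\widetilde T_{\mesh\exm,\tdisc\exm}\,w\le C(\theta)\,\|w\|_{L^1(\Omega\times(0,T))}$, valid for every $m$, and the convergence $\widetilde T_{\mesh\exm,\tdisc\exm}\,\psi\to 0$ for a fixed smooth function $\psi$. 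Since $u\mapsto u_K^n$ is linear and each jump is an absolute value, the triangle inequality yields the subadditivity $\widetilde T_{\mesh,\tdisc}(u+v)\le\widetilde T_{\mesh,\tdisc}\,u+\widetilde T_{\mesh,\tdisc}\,v$, which is the engine of all the estimates below.

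\emph{Uniform $L^1$-bound.} For the spatial term I would bound $|u_L^n-u_K^n|\le|u_K^n|+|u_L^n|$ and reorder the sum over pairs $\{K,L\}\in\mathcal S_x$ as a sum over cells, so that each cell $K$ carries the total weight $\sum_{L:\,\{K,L\}\in\mathcal S_x}\omega_{K,L}\le\theta\,|K|$ by the definition \eqref{eq:reg_gene} of $\theta_\mesh$. Using $|K|\,|u_K^n|\le(t_{n+1}-t_n)^{-1}\int_{K\times(t_n,t_{n+1})}|u|$, the summation over $K$ and $n$ (weighted by $t_{n+1}-t_n$) reassembles into $\|u\|_{L^1}$, giving a bound $\theta\,\|u\|_{L^1}$ for the spatial part. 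The temporal term is handled in exactly the same way: $|u_K^p-u_K^q|\le|u_K^p|+|u_K^q|$, reorder over time indices so that each level $n$ carries weight $\sum_{p:\,\{n,p\}\in\mathcal S_t}\delta_{n,p}\le\theta\,(t_{n+1}-t_n)$ by \eqref{eq:reg_gene}, and the same reassembly yields $\theta\,\|u\|_{L^1}$. Hence $\widetilde T_{\mesh\exm,\tdisc\exm}\,w\le 2\theta\,\|w\|_{L^1}$ for all $m$.

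\emph{Smooth functions and density.} For a fixed $\psi\in C_c^\infty(\Omega\times(0,T))$, Lipschitz with constant $L_\psi$, a spatial jump satisfies $|\psi_K^n-\psi_L^n|\le L_\psi\,\mathfrak d(\{K,L\})\le L_\psi\,\mathfrak d(\mesh\exm)$; factoring this out and using $\sum_{\{K,L\}\in\mathcal S_x}\omega_{K,L}\le\theta\,|\Omega|$ together with $\sum_n(t_{n+1}-t_n)=T$ shows the spatial part is $O(\mathfrak d(\mesh\exm))$. Likewise $|\psi_K^p-\psi_K^q|\le L_\psi\,\mathfrak d(\{p,q\})\le L_\psi\,\mathfrak d(\tdisc\exm)$ and $\sum_{\{p,q\}\in\mathcal S_t}\delta_{p,q}\le\theta\,T$ make the temporal part $O(\mathfrak d(\tdisc\exm))$; since $\mathfrak d(\mesh\exm)$ and $\mathfrak d(\tdisc\exm)$ tend to zero, $\widetilde T_{\mesh\exm,\tdisc\exm}\,\psi\to 0$. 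Combining this with the uniform bound and the density of $C_c^\infty(\Omega\times(0,T))$ in $L^1$: given $\varepsilon>0$, choose $\psi$ with $\|u-\psi\|_{L^1}<\varepsilon$, so that $\widetilde T_{\mesh\exm,\tdisc\exm}\,u\le\widetilde T_{\mesh\exm,\tdisc\exm}\,\psi+2\theta\,\varepsilon$ and therefore $\limsup_{m}\widetilde T_{\mesh\exm,\tdisc\exm}\,u\le 2\theta\,\varepsilon$; letting $\varepsilon\to 0$ proves the single-function statement $\widetilde T_{\mesh\exm,\tdisc\exm}\,u\to 0$.

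\emph{Uniformity in $p$.} By subadditivity and the uniform bound, $\widetilde T_{\mesh\exm,\tdisc\exm}\,u_p\le\widetilde T_{\mesh\exm,\tdisc\exm}\,u+2\theta\,\|u_p-u\|_{L^1}$. Given $\eta>0$, pick $p_0$ with $\|u_p-u\|_{L^1}<\eta/(4\theta)$ for $p\ge p_0$; for these indices the right-hand side is at most $\widetilde T_{\mesh\exm,\tdisc\exm}\,u+\eta/2$, which is $<\eta$ once $m$ is large, by the single-function result. For the finitely many indices $p<p_0$, the single-function result applied to each fixed $u_p$ gives $\widetilde T_{\mesh\exm,\tdisc\exm}\,u_p\to 0$, so a common threshold in $m$ makes them all $<\eta$; taking the larger threshold settles every $p$ at once. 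The one genuinely delicate point is precisely this last passage: the naive estimate $2\theta\,\|u_p-u\|_{L^1}$ is not uniform in $p$, and one must peel off the finitely many early indices and treat them as fixed $L^1$ data. Every remaining step is a reordering of sums controlled by the regularity parameters \eqref{eq:reg_gene}.
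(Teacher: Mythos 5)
Your proposal is correct and follows essentially the same route as the paper, which proves this lemma by deferring to the proof of \cite[Lemma 4.3]{gal-19-wea} "with minor modifications": that argument is precisely the density scheme you reconstruct, namely a uniform bound $\widetilde T_{\mesh\exm,\tdisc\exm}\,w \le C(\theta)\,\Vert w \Vert_{L^1}$ obtained by reordering the sums through the regularity parameters \eqref{eq:reg_gene}, an $O\bigl(\mathfrak d(\mesh\exm)+\mathfrak d(\tdisc\exm)\bigr)$ estimate for smooth compactly supported functions, and the splitting in $p$ (tail indices via the uniform bound, finitely many early indices treated as fixed $L^1$ data) to get uniformity. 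Your observation that the pairs $\{K,L\}\in\mathcal S_x$ and $\{p,q\}\in\mathcal S_t$ need not come from a dual-mesh or consecutive-time structure is exactly the point made in Remark \ref{rmrk:diff} of the paper.
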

%
%
\bibliographystyle{abbrv}
\bibliography{lw}
\end{document}